\theoremstyle{plain}
\newtheorem{theorem}{Theorem}
\newtheorem{lemma}{Lemma}
\newtheorem{prop}{Proposition}
\newtheorem{corollary}{Corollary}
\theoremstyle{definition}
\author{Mitchell Paukner\thanks{Student Blugold Commitment Differential Tuition funds through the University of Wisconsin-Eau Claire Summer Research Experiences for Undergraduates }
\and
Lucy Pepin\thanks{Student Blugold Commitment Differential Tuition funds through the University of Wisconsin-Eau Claire Summer Research Experiences for Undergraduates }
\and
Manda Riehl\thanks{University of Wisconsin - Eau Claire Office of Research and Sponsored Programs}
\and
Jarred Wieser \thanks{University of Wisconsin - Eau Claire Ronald E. McNair Postbaccalaureate Achievement Program}}
\title{Pattern Avoidance in Task-Precedence Posets}
\affiliation{
  Department of Mathematics\\
University of Wisconsin -- Eau Claire\\
Eau Claire, WI, USA \\}
\keywords{permutation pattern, poset}
\begin{document}
\publicationdetails{18}{2016}{2}{3}{1324}
\maketitle

\begin{abstract}
We have extended classical pattern avoidance to a new structure: multiple task-precedence posets whose Hasse diagrams have three levels, which we will call diamonds. The vertices of each diamond are assigned labels which are compatible with the poset. A corresponding permutation is formed by reading these labels by increasing levels, and then from left to right. We used Sage to form enumerative conjectures for the associated permutations avoiding collections of patterns of length three, which we then proved. We have discovered a bijection between diamonds avoiding 132 and certain generalized Dyck paths. We have also found the generating function for descents, and therefore the number of avoiders, in these permutations for the majority of collections of patterns of length three. An interesting application of this work (and the motivating example) can be found when task-precedence posets represent warehouse package fulfillment by robots, in which case avoidance of both 231 and 321 ensures we never stack two heavier packages on top of a lighter package.
\end{abstract}

\section{Introduction}\label{Introduction}

In this paper, we continue a rich tradition of extending the notion of classical pattern avoidance in permutations to other structures. Given permutations $\pi=\pi_1\pi_2 \cdots \pi_n$ and $\rho = \rho_1 \rho_2 \cdots \rho_m$ we say that $\pi$ \emph{contains} $\rho$ as a pattern if there exist $1 \leq i_1 < i_2 < \cdots < i_m \leq n$ such that $\pi_{i_a} < \pi_{i_b}$ if and only if $\rho_a < \rho_b$. In this case we say that $\pi_{i_1}\pi_{i_2}\cdots \pi_{i_m}$ is \emph{order-isomorphic} to $\rho$ and that $\pi_{i_1}\pi_{i_2}\cdots \pi_{i_m}$ \emph{reduces to} $\rho$. If $\pi$ does not contain $\rho$, then $\pi$ is said to \emph{avoid} $\rho$. The classical definition of pattern avoidance in permutations has shown itself to be worthwhile in many fields including algebraic geometry \cite{Henning} and theoretical computer science \cite{Knuth}. Analogues of pattern avoidance have been developed for a variety of combinatorial objects including Dyck paths \cite{Dyck13}, tableaux \cite{Lewis}, set partitions \cite{sagan}, trees \cite{Rowland10}, posets \cite{Hopkins}, and many more. We use a definition of pattern avoidance that is similar to that used in the study of heaps \cite{Heaps}, but distinct from that used in previous studies of trees. Unlike the question studied by Hopkins and Weiler \cite{Hopkins} which identified classes of posets for which certain properties are preserved, we extend the enumerative question of pattern avoidance to a particular class of posets.

A \emph{task-precedence poset} is a poset which represents the order relations between several tasks to be completed. We are particularly interested in considering $d$ identical task-precedence posets, and here we focus our attention on those sets of tasks that require one task be completed before any others, and one final task after any others, with no restrictions on the rest of the tasks in the list. When considering a list of $4$ tasks, the Hasse diagram of this poset is a diamond, and as such we will refer to a task-precedence poset of this type with $v$ tasks as a \emph{diamond} with $v$ vertices (each with $v-2$ vertices in the middle level). We then assign unique labels from $\{1, 2, \ldots, vd\}$ to each vertex such that the labels obey the order relations of each diamond. We then refer to the set of all such labelled collections of diamonds as $\mathcal{D}_{v,d}.$

Given an element $D$ of $\mathcal{D}_{v,d}$ we associate a permutation $\pi_{D}$ by recording the vertex labels as they are encountered reading the labels on each diamond consecutively, left to right by levels, beginning with the least element. For example, if $D$ is as pictured in Figure \ref{tikz1}, then $\pi_{D}=156273498(10).$ We say that $D$ contains (respectively avoids) $\rho$ as a pattern if $\pi_{D}$ contains (respectively avoids) $\rho$ as a classical pattern, using the definition above. We will abuse notation and sometimes refer to an element of $\mathcal{D}_{v,d}$  and it’s associated permutation interchangeably. Let $\mathcal{D}_{v,d}(P)$ be the elements of $\mathcal{D}_{v,d}$ that avoid all patterns in list $P.$ While Figure \ref{tikz1} contains $123, 132, 213, 231, 312$, it is a member of $\mathcal{D}_{5,2}(321)$. Two patterns on diamonds, $\alpha$ and $\beta$, are said to be \emph{$d$-Wilf-equivalent} if they have the same enumeration, that is, if $\left|\mathcal{D}_{v,d}(\alpha)\right|= \left|\mathcal{D}_{v,d}(\beta)\right|$ for all $v$ and $d$. If so, we write $\alpha \sim_W \beta$.

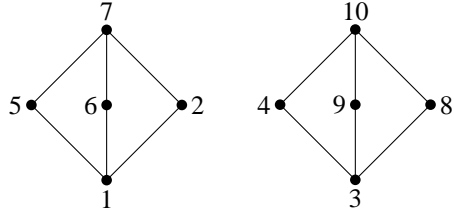
\begin{figure}
\begin{center}
\begin{tikzpicture}

	\coordinate[label=below:1] (a) at (2,-1);
	\coordinate[label=left:5] (b) at (1,0);
	\coordinate[label=above:7] (c) at (2,1);
	\coordinate[label=right:2] (d) at (3,0);
    \coordinate[label=left:6] (e) at (2,0);

	\draw (a) -- (b) -- (c) -- (d) -- (a);
    \draw (a) -- (e) -- (c);

	\foreach \i in {a,b,c,d,e}
		\fill (\i) circle (2pt);
\end{tikzpicture}
\hspace{.3cm}
\begin{tikzpicture}

	\coordinate[label=below:3] (a) at (2,-1);
	\coordinate[label=left:4] (b) at (1,0);
	\coordinate[label=above:10] (c) at (2,1);
	\coordinate[label=right:8] (d) at (3,0);
    \coordinate[label=left:9] (e) at (2,0);

	\draw (a) -- (b) -- (c) -- (d) -- (a);
    \draw (a) -- (e) -- (c);

	\foreach \i in {a,b,c,d,e}
		\fill (\i) circle (2pt);
\end{tikzpicture}
\end{center}\caption{An element of $\mathcal{D}_{5,2}(321)$.}\label{tikz1}
\end{figure}

Our motivation comes from a real-life application, namely a fleet of robots all completing the same sequence of tasks in a warehouse for package fulfillment. In 2011, instead of having human employees walk the warehouse floor retrieving items one after another to complete an order, Amazon began utilizing Kiva robots in their package fulfillment warehouses \cite{Kiva}. Each robot executes $4$ pieces of the larger task. We assign robots to diamonds ordered by the weight of the object they will deliver, heaviest object first, so that the tasks to retrieve the first, heaviest object are represented in diamond $1$, and the lightest object by the final diamond. First the robot drives to the appropriate inventory rack and mounts the rack on its back. Then it can either drive through the warehouse highways to its picker (the human employee who will retrieve the item off the rack without leaving their station), or it can rotate itself so that the appropriate side of the rack is facing the picker. Both of these need to be completed before the final step: having the item picked off the rack by the human employee in order to place it in its shipping box. In this way, completing one order of $d$ items from Amazon.com is exactly the task-precedence poset represented by $d$ diamonds with $4$ vertices each.

We now give an example of this process, referring throughout to Figure \ref{robot}. A customer has made an order for $3$ objects, $o_1, o_2, o_3$, with weights $w(o_1) > w(o_2)> w(o_3).$ Thus the leftmost diamond will represent the tasks completed by the robot retrieving object $1$, the center diamond for retrieving object $2$, and the rightmost diamond for retrieving object $3$. The labels represent the order in which each task of the $12$ total tasks is executed. Each robot operates autonomously and independently, and each faces its own challenges. For example one of the objects may be at the back of the warehouse, there may be significant traffic along some of the paths the robots travel through the warehouse, or the robot assigned to retrieve an object may still be executing its previous assignment. Thus the labels on the least elements of each diamond can vary significantly, and there can be a large difference in the labelling of the least element of a particular diamond and its greatest element. In Figure \ref{robot}, the first task completed is that the robot for object $3$ arrives and picks up the rack containing object $3$. Next, the robot retrieving object $1$ arrives at the rack containing object $1$. Next, the robot carrying object $1$ rotates its rack on its back to have the correct orientation to the picker. This continues, and based on the labelling of the elements, we see that object $3$ (the lightest) is placed in its shipping box first (in step $9$), then object $1$ (in step $11$), and then object $2$ (in step $12$). So our human picker has placed two heavier objects on a lighter object (unless they rearrange the objects after packing). Then a a sufficient (though not necessary) condition to ensure that two heavier objects do not arrive after a lighter object is that the associated permutation avoid $231$ and $321$.

\begin{figure}[H]
\begin{center}
\begin{tikzpicture}

	\coordinate[label=below:2] (a) at (2,-1);
	\coordinate[label=left:10] (b) at (1,0);
	\coordinate[label=above:11] (c) at (2,1);
	\coordinate[label=right:5] (d) at (3,0);

	\draw (a) -- (b) -- (c) -- (d) -- (a);

	\foreach \i in {a,b,c,d}
		\fill (\i) circle (2pt);
\end{tikzpicture}
\hspace{.1cm}
\begin{tikzpicture}

	\coordinate[label=below:4] (a) at (2,-1);
	\coordinate[label=left:6] (b) at (1,0);
	\coordinate[label=above:12] (c) at (2,1);
	\coordinate[label=right:8] (d) at (3,0);

	\draw (a) -- (b) -- (c) -- (d) -- (a);

	\foreach \i in {a,b,c,d}
		\fill (\i) circle (2pt);
\end{tikzpicture}
\hspace{.1cm}
\begin{tikzpicture}

	\coordinate[label=below:1] (a) at (2,-1);
	\coordinate[label=left:7] (b) at (1,0);
	\coordinate[label=above:9] (c) at (2,1);
	\coordinate[label=right:3] (d) at (3,0);

	\draw (a) -- (b) -- (c) -- (d) -- (a);

	\foreach \i in {a,b,c,d}
		\fill (\i) circle (2pt);
\end{tikzpicture}
\end{center}\caption{An example of a $3$ robot task-precedence poset whose associated permutation does not avoid $231$ and $321$.}\label{robot}
\end{figure}
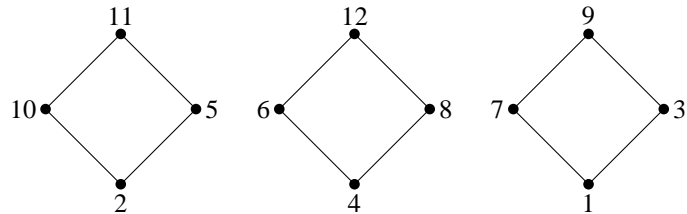

One could consider other applications that arise from task-precedence problems, but our motivating example can be generalized most appropriately by changing $4$ tasks per autonomous robot to $v$ tasks.

The $generating\phantom{.}\phantom{.}function\phantom{.}for\phantom{.}descents$ $(gfd)$ for $\mathcal{D}_{v,d}(P)$ is $f^P_{v,d}(x,y) = \sum_{D \in \mathcal{D}_{v,d}(P)} x^{des}y^d$, and $f^P_{v}(x,y) = \sum_{d=1}^{\infty}f^P_{v,d}(x,y)$. For example,  $\mathcal{D}_{4,2}(213)$ is the set of diamonds with associated permutations $1\phantom{.}2\phantom{.}3\phantom{.}4\phantom{.}5\phantom{.}6\phantom{.}7\phantom{.}8$, $1\phantom{.}2\phantom{.}3\phantom{.}8\phantom{.}4\phantom{.}5\phantom{.}6\phantom{.}7$, $1\phantom{.}2\phantom{.}7\phantom{.}8\phantom{.}3\phantom{.}4\phantom{.}5\phantom{.}6$, $1\phantom{.}6\phantom{.}7\phantom{.}8\phantom{.}2\phantom{.}3\phantom{.}4\phantom{.}5$, and $5\phantom{.}6\phantom{.}7\phantom{.}8\phantom{.}1\phantom{.}2\phantom{.}3\phantom{.}4$.
So, $f^{213}_{4,2}(x,y) = y^2(1+4x)$.

\begin{table}
\begin{center}
\begin{tabular}{|l|l|l|l|}
\hline
Patterns $P$&$\left\{\left|\mathcal{D}_{4,d}(P)\right|\right\}_{d \geq 1}$&OEIS& Result\\
\hline
$\emptyset$ & $2,280,277200,10090080000,\dots$ &A260331 & Theorem \ref{total}\\
\hline
\hline
$123$ & $0,0,0,0,0,0,0,0,0,\dots$ & A000007 & Theorem \ref{av123}\\
\hline
$132$& \multirow{2}{*}{$1,5,35,285,2530,23751,231880,2330445,\dots$} & \multirow{2}{*}{A002294} &\multirow{2}{*}{Theorem \ref{av132}}\\
$213$&&&\\
\hline
$231$& \multirow{2}{*}{$2,18,226,3298,52450,881970,\dots$} & \multirow{2}{*}{A260332} &\multirow{2}{*}{Theorem \ref{av231}}\\
$312$&&&\\
\hline
$321$ & $2,106,5976,\dots$ & A260579 & OPEN \\
\hline
\hline
$132,213$ & $1,2,4,8,16,32,64,128,256,\dots$ & A000079 & Theorem \ref{av132213}\\
\hline
$132,312$ & \multirow{2}{*}{$1,2,4,8,16,32,64,128,256,\dots$} & \multirow{2}{*}{A000079}& \multirow{2}{*}{Theorem \ref{av132312}}\\
$213,231$&&& \\
\hline
$132,321$ & \multirow{2}{*}{$1,5,13,25,41,61,85,113,145,\dots$} & \multirow{2}{*}{A001844}& \multirow{2}{*}{Theorem \ref{av132321}}\\
$213,321$&&& \\
\hline
$231,312$ & $2,8,32,128,512,2048,\dots$ & A004171 &Theorem \ref{av231312}\\
\hline
$231,321$ & \multirow{2}{*}{$2,14,98,686,4802,33614,235298,\dots$} &\multirow{2}{*}{A109808}& \multirow{2}{*}{Theorem \ref{av231321}}\\
$312,321$&&&\\
\hline
\hline
$132,213,321$&$1,2,3,4,5,6,7,8,9,\dots$&A000027&Theorem \ref{av132213321}\\
\hline
$231,312,321$ & $2, 8, 32, 128, 512, 2048, 8192, 32768, 131072\dots$ & A081294&Theorem \ref{av231312321}\\
\hline
\end{tabular}
\end{center}
\caption{Enumeration of pattern-avoiding diamonds when $v=4$}
\label{seqs}
\end{table}

Throughout this paper, the main question we answer is ``How many elements are in $\mathcal{D}_{v,d}(P)$?'' for any collection $P$ of patterns of length $3$. In general we fix $v \geq 4$ and a set of patterns $P$ and then determine a formula for the sequence $\left\{\left|\mathcal{D}_{v,d}(P)\right|\right\}_{d \geq 1}$, with key results for $v=4$ shown in Table \ref{seqs}.  The third column of the table gives entries from the Online Encyclopedia of Integer Sequences \cite{OEIS}. Our results for pattern-avoiding diamonds have connections with many other combinatorial objects, as evidenced by the low reference numbers. Sequences A260331, A260332 and A260579, however, are new results particular to this study of task precedence posets.

Our task, which answers our primary question, is to find $f^P_{v,d}(x,y)$. Then when we substitute $x = 1$ and take the coefficient of $y^d$, we obtain $\left|\mathcal{D}_{v,d}(P)\right|$.

In Section \ref{Ssingle} we consider collections of diamonds that avoid a single pattern of length 3.  In Section \ref{Spair} we consider collections of diamonds that avoid a pair of patterns of length 3, and in Section \ref{Sset} we consider collections of diamonds avoiding three or more patterns of length 3. Finally in Section \ref{open}, we list some open problems relating to this work.

\section{Diamonds avoiding a single pattern of length 3}\label{Ssingle}

Before we count pattern-avoiding diamonds, it is useful to enumerate \emph{all} diamonds.
\begin{theorem} \label{total}
$\left|\mathcal{D}_{v,d}(\emptyset)\right| = \frac{(vd)!}{v^d(v-1)^d}$
\end{theorem}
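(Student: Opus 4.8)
The plan is to count the labelings directly by first distributing the labels among the diamonds and then arranging them within each diamond. A labeling counted by $\left|\mathcal{D}_{v,d}(\emptyset)\right|$ is precisely an assignment of the $vd$ distinct labels to the $vd$ vertices that respects each diamond's order relations, and since the $d$ diamonds are distinguishable by their left-to-right position in the reading order, I can factor the count as (choice of which labels land in which diamond) $\times$ (compatible internal arrangement within each diamond).

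First I would count the ways to split $\{1,2,\dots,vd\}$ into an ordered sequence of $d$ blocks of size $v$, one block per diamond. This is the multinomial coefficient $\binom{vd}{v,v,\dots,v} = \frac{(vd)!}{(v!)^d}$. Next, fix one diamond together with the set of $v$ labels assigned to it. The poset forces the least element (bottom level) to receive the smallest of these $v$ labels and the greatest element (top level) to receive the largest, while the remaining $v-2$ labels may be placed on the $v-2$ middle vertices in any order, since those vertices form an antichain and are distinguishable by position. This gives exactly $(v-2)!$ compatible arrangements per diamond, hence $\big((v-2)!\big)^d$ arrangements in all, independent of how the labels were distributed.

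Multiplying and simplifying via $\frac{(v-2)!}{v!} = \frac{1}{v(v-1)}$ then yields
\[
\frac{(vd)!}{(v!)^d}\cdot\big((v-2)!\big)^d \;=\; (vd)!\left(\frac{(v-2)!}{v!}\right)^{d} \;=\; \frac{(vd)!}{\big(v(v-1)\big)^{d}} \;=\; \frac{(vd)!}{v^{d}(v-1)^{d}},
\]
as claimed. Equivalently, one can phrase this probabilistically: a uniformly random bijection from the $vd$ vertices to the labels places the correct label on the bottom and top of a given diamond with probability $\frac{1}{v}\cdot\frac{1}{v-1}$, and these events are independent across the $d$ diamonds, so the count is $(vd)!\cdot\big(v(v-1)\big)^{-d}$.

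The argument is essentially routine; the only point requiring genuine care is confirming that no symmetry factor must be divided out. The diamonds are distinguishable by position and the middle vertices within each diamond are likewise distinguishable by position, so the naive product over the independent choices is exactly right rather than an overcount. As a sanity check, $v=4,\,d=1$ and $v=4,\,d=2$ give $2$ and $280$, matching the $\emptyset$ row of Table \ref{seqs}.
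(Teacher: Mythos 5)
Your proof is correct and takes essentially the same approach as the paper: distribute the $vd$ labels among the diamonds via the multinomial coefficient $\binom{vd}{v,\ldots,v}$, then count $(v-2)!$ compatible internal arrangements per diamond, and simplify to $\frac{(vd)!}{v^d(v-1)^d}$. Your write-up simply makes explicit the details (forced placement of the smallest and largest label, distinguishability of diamonds and of middle vertices) that the paper's terser proof leaves implicit.
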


\begin{proof} Let $v \geq 4$ and $d \geq 1$, first we choose $v$ labels for each diamond, and then there are
$(v-2)!$ ways to arrange the internal vertex labels of any given diamond. We obtain
 \begin{align*}
\binom{vd}{v, \ldots, v}(v-2)!^d
&=\frac{(vd)!}{(v!)^d}\left((v-2)!\right)^d\\
&=\frac{(vd)!}{v^d(v-1)^d}.
\end{align*}
\end{proof}

\begin{theorem} \label{av123}
$\left|\mathcal{D}_{v,d}(123)\right| = 0.$
\end{theorem}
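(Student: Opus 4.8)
The plan is to exhibit a $123$ pattern inside \emph{every} element of $\mathcal{D}_{v,d}$, which immediately forces $\mathcal{D}_{v,d}(123)$ to be empty and hence of cardinality $0$. The structural fact I would rely on is the reading order used to build $\pi_D$: it records, for each diamond separately and as one contiguous block, first that diamond's least element, then its $v-2$ middle vertices (left to right), and finally its greatest element. This is exactly the order in which the levels of a single diamond are traversed, as confirmed by the worked example $\pi_D = 156273498(10)$.

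Next I would fix any single diamond $\Delta$ occurring in $D$. Since the labels must respect the order relations of $\Delta$, its least element $b$ satisfies $b < m$ for every middle vertex $m$, and each such $m$ satisfies $m < t$ where $t$ is the greatest element of $\Delta$. Because $v \geq 4$, the middle level of $\Delta$ is nonempty (it contains $v-2 \geq 2$ vertices), so I may choose any middle vertex $m$. Within the block of $\pi_D$ corresponding to $\Delta$, the label $b$ appears before $m$, which appears before $t$, while as values $b < m < t$. Thus the subsequence $b\,m\,t$ of $\pi_D$ reduces to $123$.

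It follows that every $D \in \mathcal{D}_{v,d}$ contains $123$, so no element can avoid it, giving $\mathcal{D}_{v,d}(123) = \emptyset$ and therefore $\left|\mathcal{D}_{v,d}(123)\right| = 0$. I do not expect any real obstacle here: the only point that deserves care is verifying that the reading order genuinely places the least element before the middle vertices before the greatest element within one diamond, which is precisely what the definition of $\pi_D$ guarantees. Once that is observed, the increasing triple $b\,m\,t$ is forced, and the hypothesis $v \geq 4$ (needed only to guarantee a nonempty middle level) completes the argument.
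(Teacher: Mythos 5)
Your proof is correct and takes essentially the same approach as the paper's: the paper simply asserts that a $123$ pattern is ``inherent in all valid diamond labellings,'' and your argument makes this explicit by exhibiting the increasing triple consisting of the least element, a middle vertex, and the greatest element of any single diamond, read in exactly that order by $\pi_D$. Your version is more detailed but not a different route.
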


\begin{proof}
It is impossible to avoid $123$ while having a diamond since the pattern is inherent in all valid diamond labellings.
\end{proof}

\subsection{The patterns 132 and 213}

The \emph{complement} of a permutation $\pi$ of length $n$, denoted by $\pi^c$, is obtained
by replacing each letter $j$ by the letter $n$ $-$ $j+1$. The \emph{reverse} of $\pi = \pi_1 \pi_2 \ldots \pi_n$, denoted by $\pi^r$, is $\pi_n \pi_{n-1} \ldots \pi_1$. We let $\pi^{rc}$ be the \emph{reverse-complement} of $\pi$ and $\mathcal{D}_{v,d}(p)^{rc}$ be $\left\{\pi_D^{rc} \mid D \in \mathcal{D}_{v,d}(p)\right\}$.

\begin{prop}\label{Wilf}
The reverse-complement of a task precedence poset remains a legal poset and $\mathcal{D}_{v,d}(p)^{rc}=\mathcal{D}_{v,d}(p^{rc})$. In addition, for $k\geq1$, $\mathcal{D}_{v,d}(p_{1},p_{2},\ldots,p_{k})^{rc}=\mathcal{D}_{v,d}(p_{1}^{rc},p_{2}^{rc},\ldots,p_{k}^{rc}).$
\end{prop}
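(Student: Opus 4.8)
The plan is to separate the proposition into two essentially independent pieces: a purely structural claim that the reverse-complement operation sends diamond permutations to diamond permutations, and a standard symmetry of classical pattern containment that converts this structural statement into the claimed equalities of avoidance classes. The structural claim is the only part specific to our setting; the rest rides on the well-known fact that a permutation $\pi$ contains $\rho$ if and only if $\pi^{rc}$ contains $\rho^{rc}$.

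First I would prove the structural claim by tracking what $rc$ does to the block decomposition of $\pi_D$. Writing $n = vd$, the permutation $\pi_D$ splits into $d$ consecutive blocks of length $v$, where the $k$-th block records, in order, the least label $\ell_k$, then the $v-2$ middle labels, then the greatest label $g_k$, and the diamond conditions are exactly $\ell_k < (\text{each middle label}) < g_k$ (with no order imposed among the middle labels). Reversal sends positions $(k-1)v+1, \ldots, kv$ to positions $(d-k)v+1, \ldots, (d-k+1)v$, so it permutes the blocks (block $k$ becomes block $d-k+1$) while reversing the reading order inside each block; the $k$-th block becomes $g_k$, the middle labels in reversed order, then $\ell_k$. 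Applying the complement $x \mapsto n-x+1$, which is an order-reversing bijection of $\{1,\dots,n\}$, the block becomes $n-g_k+1$, the complemented middle labels, $n-\ell_k+1$. Because complementation reverses all the inequalities, $n-g_k+1$ is now the smallest entry of the block and $n-\ell_k+1$ the largest, with the complemented middle labels strictly between them. Hence each block again has its least element first, its greatest element last, and its middle labels in between, and since complement is a bijection the whole word is a permutation of $\{1,\dots,n\}$ using $v$ distinct labels per block. This is precisely a legal diamond labelling, so $\pi_D^{rc} = \pi_{D'}$ for a unique $D' \in \mathcal{D}_{v,d}$.

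With the structural claim in hand, the set equalities follow formally. The map $D \mapsto D'$ just constructed is an involution on $\mathcal{D}_{v,d}$ because $rc$ is an involution on permutations. By the standard symmetry, $\pi_D$ avoids $p$ if and only if $\pi_D^{rc}$ avoids $p^{rc}$; thus $D \in \mathcal{D}_{v,d}(p)$ exactly when $D' \in \mathcal{D}_{v,d}(p^{rc})$. Reading this through the definition $\mathcal{D}_{v,d}(p)^{rc} = \{\pi_D^{rc} \mid D \in \mathcal{D}_{v,d}(p)\}$ and using that $rc$ is a bijection gives both inclusions, hence $\mathcal{D}_{v,d}(p)^{rc} = \mathcal{D}_{v,d}(p^{rc})$. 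The multi-pattern statement is immediate: $\pi$ avoids every $p_i$ iff $\pi^{rc}$ avoids every $p_i^{rc}$, so the same argument applies verbatim with the single avoidance condition replaced by the conjunction over $i = 1, \dots, k$.

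I expect the only real work to be the block-level bookkeeping in the second paragraph, carefully confirming that reversal permutes the blocks and reverses them internally, and that complementation turns the ``least-first, greatest-last'' pattern back into itself rather than breaking it. Once that invariant is verified, the pattern-avoidance half is entirely routine and relies on nothing beyond the classical $rc$ symmetry.
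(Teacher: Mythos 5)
Your proof is correct and fills in exactly the argument the paper treats as self-evident: the paper's entire proof is the one-line assertion that the claim ``is clear from the definitions,'' and your block-decomposition check that $rc$ preserves the least-first, greatest-last structure of each diamond, combined with the classical symmetry that $\pi$ contains $p$ iff $\pi^{rc}$ contains $p^{rc}$, is precisely the verification being waved at. No discrepancy in approach --- yours is simply the careful, written-out version.
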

\begin{proof}
This is clear from the definitions and from how $\mathcal{D}_{v,d}(p)^{rc}$ is created from $p$.
\end{proof}

Thus we immediately see that a) $132 \sim_W 213$, b) $231 \sim_W 312$, c) $132, 312 \sim_W 213, 231$, d) $132, 321 \sim_W 213, 321$, and e) $231, 321 \sim_W 312, 321$.

Given a permutation $\pi$ in $S_n$, $lis(\pi)$ is the length of a longest increasing subsequence in $\pi$. For example, in the permutation $1\phantom{.}2\phantom{.}5\phantom{.}6\phantom{.}3\phantom{.}4\phantom{.}7\phantom{.}8$ a longest increasing subsequence is $1\phantom{.}2\phantom{.}5\phantom{.}6\phantom{.}7\phantom{.}8$ and $lis($1\phantom{.}2\phantom{.}5\phantom{.}6\phantom{.}3\phantom{.}4\phantom{.}7\phantom{.}8$) = 6$. Given a permutation $\pi$ in $S_n$, $rlmax(\pi)$ is the number of right-left maxima in $\pi$.
 For example, in the permutation $2\phantom{.}4\phantom{.}6\phantom{.}8\phantom{.}1\phantom{.}3\phantom{.}5\phantom{.}7$ a maximum is reached when reading right-to-left twice and $rlmax($2\phantom{.}4\phantom{.}6\phantom{.}8\phantom{.}1\phantom{.}3\phantom{.}5\phantom{.}7$) = 2$.
Let $Dyck_{v,d}$ be the set of all paths from $(0,0)$ to $(d,vd)$ using only $(0,1)$ and $(1,0)$ steps (East and North steps) which stay weakly under $y = vx$. Given any $p$ $\in$ $Dyck_{v,d}$, $touchpoints(p)$ is the number of times $p$ touches the line $y = vx$, excluding the point $(v,vd)$.
In Figure 2.1, the Dyck path touches the line $y = 4x$ three times and $touchpoints(p) = 3$.
Given any $p$ $\in$ $Dyck_{v,d}$, $corners(p)$  is the number of North steps that are followed by one or more East steps in $p$.
In Figure 2.1, there are three places where the Dyck path has one or more North steps followed by one or more East steps and $corners(p) = 3$.
Given any $p$ $\in$ $Dyck_{v,d}$, $height(p)$ is the greatest vertical distance from any point on $p$ to the line $y = vx$.
 In figure 2.1, the longest distance from a corner in the Dyck path to the line $y = 4x$ is seven (from $(3,5)$ to $(3,12)$) and $height(p) = 7$.

\begin{lemma}\label{lem132} Any element of $\mathcal{D}_{v,d}(132)$ has the elements on each diamond labelled in increasing order. \end{lemma}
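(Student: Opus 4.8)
The plan is to prove the contrapositive, one diamond at a time: I will show that if the labels on any single diamond fail to increase in reading order, then $\pi_D$ must contain a $132$ pattern, so $D \notin \mathcal{D}_{v,d}(132)$.

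First I would fix a single diamond inside $D$ and set up notation for its contribution to $\pi_D$. By the reading rule, the vertices of this diamond appear consecutively in $\pi_D$ as $\ell, m_1, m_2, \ldots, m_{v-2}, g$, where $\ell$ is the label of the least element, $g$ is the label of the greatest element, and $m_1, \ldots, m_{v-2}$ are the middle labels read left to right. Because the labelling is compatible with the poset, the least element lies below every middle vertex and every middle vertex lies below the greatest element, so $\ell < m_i < g$ for all $i$; in particular $\ell$ is the minimum and $g$ the maximum among these $v$ labels.

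Next I would observe that the diamond reads in increasing order precisely when $m_1 < m_2 < \cdots < m_{v-2}$, since $\ell$ is already smaller and $g$ already larger than every middle label. So suppose, for contradiction, that the middles are not increasing: there exist indices $i < j$ with $m_i > m_j$. Then the three labels $\ell, m_i, m_j$ occur in this positional order within $\pi_D$ and satisfy $\ell < m_j < m_i$, which is exactly the value pattern $132$ (smallest, then largest, then middle). Hence $\pi_D$ contains $132$, contradicting $D \in \mathcal{D}_{v,d}(132)$. Therefore every diamond's middle labels increase, and with $\ell$ minimal and $g$ maximal, each diamond is labelled in increasing order.

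The argument is essentially immediate, so there is no substantial obstacle; the only points requiring care are bookkeeping ones: confirming from the reading rule that the least element precedes its own middle vertices in $\pi_D$, and invoking the order-compatibility of the labelling to guarantee $\ell < m_j$, so that the chosen triple genuinely reduces to $132$ rather than to some other pattern of length three.
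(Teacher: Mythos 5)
Your proof is correct and is essentially the paper's own argument: the paper likewise observes that if a diamond's labels are not increasing, then the diamond's least element together with the first descent among its middle labels forms a $132$ pattern. Your version merely uses an arbitrary inversion $m_i > m_j$ instead of an adjacent descent and writes out the bookkeeping (consecutive reading order, $\ell$ below all middle vertices) that the paper leaves implicit.
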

Otherwise the label of the first element of the diamond together with the first descent would form a $132$ pattern.

\begin{theorem} \label{av132}
\[\sum_{\sigma\in\mathcal{D}_{v,d(132)}}w^{rlmax(\sigma)}x^{des(\sigma)}y^{d}z^{lis(\sigma)} = \sum_{p\in Dyck_{v,d}}w^{touchpoints(p)}x^{corners(p)}y^{d}z^{height(p)}.\]
\end{theorem}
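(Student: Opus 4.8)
The plan is to prove the identity by exhibiting an explicit statistic-preserving bijection $\phi\colon\mathcal{D}_{v,d}(132)\to Dyck_{v,d}$ and then checking that the four tracked statistics correspond as claimed. By Lemma \ref{lem132}, every $\sigma\in\mathcal{D}_{v,d}(132)$ has each diamond labelled increasingly, so $\sigma$ is the concatenation of $d$ sorted blocks $B_1,\dots,B_d$, each of size $v$; the data of $\sigma$ is therefore an ordered set partition of $\{1,\dots,vd\}$ into $d$ blocks of size $v$ whose concatenation avoids $132$. In the bijection the $d$ diamonds will account for the $d$ East steps and the $vd$ labels for the $vd$ North steps of a path ending at $(d,vd)$, and the fact that a diamond holds exactly $v$ labels is precisely what forces the image path to stay weakly below $y=vx$. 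It is worth noting that the case $v=1$ is the classical correspondence between $S_n(132)$ and Dyck paths, under which these same four statistics are already known to match, so the theorem is a $v$-analogue of that fact. First I would make $\phi$ precise, verify that its image lies in $Dyck_{v,d}$, and exhibit an inverse.

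The backbone of the argument is the recursive structure of $132$-avoiders. Writing $\sigma=L\,(vd)\,R$ with $vd$ the global maximum, $132$-avoidance forces every entry of $L$ to exceed every entry of $R$; since $vd$ is the top of its diamond $B_m$, the suffix $R$ is exactly $B_{m+1}\cdots B_d$ on the smallest $v(d-m)$ values and is itself a member of $\mathcal{D}_{v,d-m}(132)$. On the path side this is the first-return (Fuss--Catalan) decomposition of a path in $Dyck_{v,d}$ at its return to the line $y=vx$. I would match these two decompositions and push the recursion through to transfer three of the statistics: $d$ is the number of East steps by construction; $rlmax(\sigma)=1+rlmax(R)$ mirrors $touchpoints=1+touchpoints(\text{remaining path})$; and because each block is increasing, descents of $\sigma$ occur only at block boundaries, and each such boundary descent should correspond to a maximal run of North steps immediately followed by an East step, i.e.\ a corner.

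The delicate point is achieving all four correspondences \emph{simultaneously}, and I expect the identity $lis(\sigma)=height(\phi(\sigma))$ to be the main obstacle, since the longest increasing subsequence is not visibly compatible with the first-return recursion. I would handle it separately. Because every entry of $L$ exceeds every entry of $R$ and $L$ precedes $R$, a longest increasing subsequence cannot cross from $L$ into $R$, so $lis(\sigma)=\max\bigl(lis(L)+1,\ lis(R)\bigr)$; on the path side the deepest vertical gap $vx-y$ occurs either in the prime first-return piece or in the translated remainder, giving $height(\phi(\sigma))=\max\bigl(height(\text{prime piece}),\ height(\text{remainder})\bigr)$. The goal is to show these two max-recursions agree and then induct, the intuition being that the deepest excursion below $y=vx$ counts exactly the small labels deferred to later diamonds, which is also what a maximal increasing thread through the stacked blocks records.

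The remaining work is bookkeeping. I would confirm that the local matches $rlmax\leftrightarrow touchpoints$ and $des\leftrightarrow corners$ hold at every level of the recursion, and in particular that the partial block $B_m\setminus\{vd\}$ produced inside $L$ is tracked consistently (the one genuinely awkward feature, since the recursion temporarily leaves the class of full-size blocks). Once $\phi$ is shown to be a bijection carrying $(rlmax,des,d,lis)$ to $(touchpoints,corners,d,height)$, the generating-function identity follows immediately by summing $w^{(\cdot)}x^{(\cdot)}y^{d}z^{(\cdot)}$ over corresponding objects on the two sides.
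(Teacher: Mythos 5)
Your outline has the right ingredients, and most of your individual observations are correct: the blocks are increasing (Lemma \ref{lem132}), every entry of $L$ exceeds every entry of $R$ in the decomposition $\sigma = L\,(vd)\,R$, and the recursions $rlmax(\sigma)=1+rlmax(R)$, $lis(\sigma)=\max\bigl(lis(L)+1,\,lis(R)\bigr)$, $height(p)=\max\bigl(height(p_1),\,height(p_2)\bigr)$ all hold. But there is a genuine gap, and it sits exactly where you wave at ``bookkeeping'': the induction never closes, because it has no valid reduction for \emph{prime} elements. If $vd$ lies in the last diamond $B_d$, then $R$ is empty, $L\,(vd)=\sigma$, and your decomposition is the trivial one; these are precisely the elements with a single right-left maximum (equivalently, the paths whose only touchpoint is the origin), and they occur for every $d$. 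Any further reduction of such an element --- removing $vd$, or removing the final North step of a prime path --- leaves the class entirely: you get $d-1$ full diamonds plus a partial diamond of size $v-1$ on one side, and a lattice path ending at a point off the line $y=vx$ on the other. So to make the induction run you must first formulate and prove a strengthened statement covering partial diamonds and paths with general endpoints (the analogue of the classes $\mathcal{D}_{v,j}^d$ that the paper introduces for the $231$ recursion). You flag this (``temporarily leaves the class of full-size blocks'') but never supply it, and in particular the identity $lis=height$ --- which you yourself call the main obstacle --- is never established for the prime pieces on which every level of your recursion depends.

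The paper avoids all of this by constructing the bijection globally and explicitly rather than recursively: given a path $p$, list the heights of its East steps (with multiplicities), reverse the list and add $1$ to each entry, declare these to be the least labels of the successive diamonds, and fill in the remaining labels of each diamond in increasing order from the smallest unused values. Invertibility is then immediate (read off the least labels, subtract $1$, reverse), and the four statistic correspondences can be checked directly on this closed-form description instead of being propagated through an induction. If you want to keep your recursive route, the fix is to prove the partial-diamond generalization first; alternatively, write down the explicit map your recursion implicitly computes --- it is essentially the paper's $\phi$ up to orientation --- and verify $(rlmax,des,lis)\leftrightarrow(touchpoints,corners,height)$ on it directly.
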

\begin{proof}

We define a map $\phi$ from $Dyck_{v,d}$ to $\mathcal{D}_{v,d}(132)$. To find $\phi(p)$, first write out the heights of the East steps. For each height, include a subscript $j$ that indicates how many East steps are at that height. Reverse this sequence and add $1$ to every item in the list, leaving the subscripts unchanged.  Each of the elements of this list becomes the first label of a diamond, and then place $vj$ labels in increasing order using the smallest elements that have not already been used as labels.

As an example, refer to Figure 2.1. The heights of the East steps are $0,4,5,12$. When this sequence is reversed and $1$ is added to each term, the resulting sequence is $13_1,6_1,5_1,1_1$. Thus the permutation associated with this Dyck path is $13\phantom{.}14\phantom{.}15\phantom{.}16\phantom{.}6\phantom{.}7\phantom{.}8\phantom{.}9\phantom{.}5\phantom{.}
10\phantom{.}11\phantom{.}12\phantom{.}1\phantom{.}2\phantom{.}3\phantom{.}4$.

The importance of the subscripts $j$ are evident from the image of Figure 2.3 under $\phi$. The heights of the East steps are $0, 3, 3, 10$, and the resulting sequence is $11_1, 4_2, 1_1$. Thus the permutation associated to image is $11\phantom{.}12\phantom{.}13\phantom{.}14\phantom{.}4\phantom{.}5\phantom{.}6\phantom{.}7\phantom{.}
8\phantom{.}9\phantom{.}10\phantom{.}15\phantom{.}1\phantom{.}2\phantom{.}3\phantom{.}16.$

This map is certainly reversible, with the first label on each diamond forming a list, unless there is an increase between diamonds, in which case the first label is repeated. Then the list is reversed and $1$ is subtracted from each element, giving us the heights of the East steps in the Dyck path.

This bijection is particularly natural when you examine common statistics on both paths and permutations. Following touchpoints, corners, and height through the bijection, we find they correspond exactly to right-left maximum, descents, and longest increasing sequence on the permutation.

\begin{figure}
\begin{center}
\begin{tikzpicture}

	\coordinate[label=left:{(0,0)}] (a) at (0,0);
	\coordinate[label=left:{(4,16)}] (b) at (4,4);
	\coordinate (c) at (1,0);
	\coordinate[label=left:{(1,4)}] (d) at (1,1);
	\coordinate (e) at (2,1);
	\coordinate[label=above:{(2,5)}] (f) at (2,5/4);
	\coordinate (g) at (3,5/4);
	\coordinate[label=left:{(3,12)}] (h) at (3,3);
	\coordinate (i) at (4,3);
	
	\draw (a) -- (b);
	\draw (a) -- (c) -- (d) -- (e) -- (f) -- (g) -- (h) -- (i) -- (b);

\end{tikzpicture}
\end{center}
\caption{A Dyck path from (0,0) to (4,16)\label{dyck1}}
\end{figure}
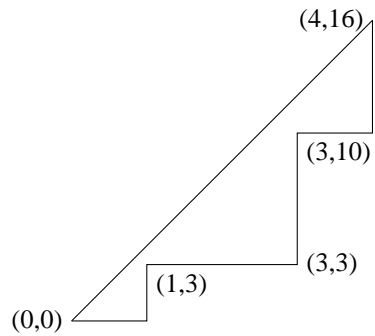

\begin{figure}
\begin{center}
\begin{tikzpicture}

	\coordinate[label=below:13] (a) at (2,-1);
	\coordinate[label=left:14] (b) at (1,0);
	\coordinate[label=above:16] (c) at (2,1);
	\coordinate[label=right:15] (d) at (3,0);

	\draw (a) -- (b) -- (c) -- (d) -- (a);

	\foreach \i in {a,b,c,d}
		\fill (\i) circle (2pt);
\end{tikzpicture}
\hspace{.3cm}
\begin{tikzpicture}

	\coordinate[label=below:6] (a) at (2,-1);
	\coordinate[label=left:7] (b) at (1,0);
	\coordinate[label=above:9] (c) at (2,1);
	\coordinate[label=right:8] (d) at (3,0);

	\draw (a) -- (b) -- (c) -- (d) -- (a);

	\foreach \i in {a,b,c,d}
		\fill (\i) circle (2pt);
\end{tikzpicture}
\hspace{.3cm}
\begin{tikzpicture}

	\coordinate[label=below:5] (a) at (2,-1);
	\coordinate[label=left:10] (b) at (1,0);
	\coordinate[label=above:12] (c) at (2,1);
    \coordinate[label=right:11] (d) at (3,0);

	\draw (a) -- (b) -- (c) -- (d) -- (a);

	\foreach \i in {a,b,c,d}
		\fill (\i) circle (2pt);

\end{tikzpicture}
\hspace{.3cm}
\begin{tikzpicture}

	\coordinate[label=below:1] (a) at (2,-1);
	\coordinate[label=left:2] (b) at (1,0);
	\coordinate[label=above:4] (c) at (2,1);
    \coordinate[label=right:3] (d) at (3,0);

	\draw (a) -- (b) -- (c) -- (d) -- (a);

	\foreach \i in {a,b,c,d}
		\fill (\i) circle (2pt);
\end{tikzpicture}
\end{center}
\caption{Diamonds labelled according to the image of Figure \ref{dyck1} under the bijection}
\end{figure}
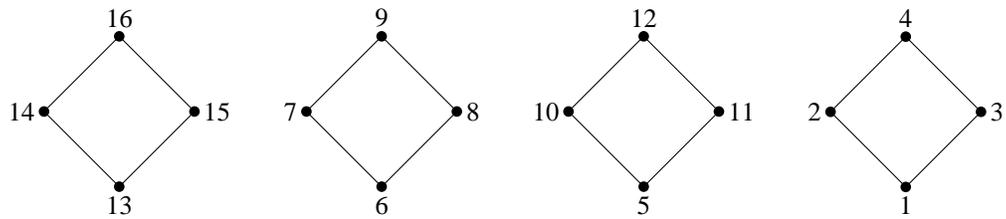

\begin{figure}
\begin{center}
\begin{tikzpicture}

	\coordinate[label=left:{(0,0)}] (a) at (0,0);
	\coordinate[label=left:{(4,16)}] (b) at (4,4);
	\coordinate (c) at (1,0);
	\coordinate[label=below right:{(1,3)}] (d) at (1,3/4);
	\coordinate[label=right:{(3,3)}] (e) at (3,3/4);
	\coordinate[label=below right:{(3,10)}] (f) at (3,10/4);
	\coordinate (g) at (4,10/4);
	
	\draw (a) -- (b);
	\draw (a) -- (c) -- (d) -- (e) -- (f) -- (g) -- (b);

\end{tikzpicture}
\end{center}
\caption{A second Dyck path from $(0,0)$ to $(4,16)$.}
\end{figure}
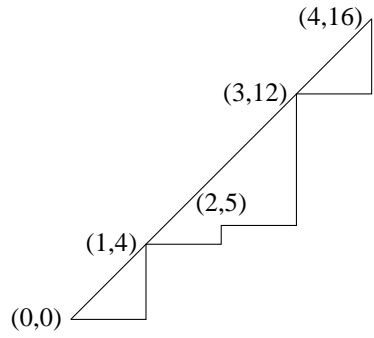

\end{proof}

\begin{corollary}
$\left|\mathcal{D}_{v,d}(132)\right| =\left|\mathcal{D}_{v,d}(213)\right| = |\text{Dyck}_{v,d}| = \frac{\binom{d(v+1)}{d}}{(vd+1)}. $\cite{Fuss1791}
\end{corollary}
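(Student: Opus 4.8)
The plan is to extract the cardinality statement from Theorem \ref{av132} by a clean specialization, and then identify the resulting count of lattice paths with the Fuss--Catalan number. First I would establish the two equalities on the left: $\left|\mathcal{D}_{v,d}(132)\right| = \left|\mathcal{D}_{v,d}(213)\right|$ follows immediately from Proposition \ref{Wilf}, since $132^{rc} = 213$ and hence $132 \sim_W 213$. The equality $\left|\mathcal{D}_{v,d}(132)\right| = |\text{Dyck}_{v,d}|$ is obtained from Theorem \ref{av132} by setting $w = x = z = 1$ and reading off the coefficient of $y^d$: the weighted identity collapses to a plain equicardinality, and since $\phi$ is a bijection (as argued in the proof of the theorem), the two sets are in bijection. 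So the only genuine content left is the enumeration $|\text{Dyck}_{v,d}| = \binom{d(v+1)}{d}/(vd+1)$.

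Next I would count $\text{Dyck}_{v,d}$ directly. These are lattice paths from $(0,0)$ to $(d,vd)$ using unit East and North steps that stay weakly below the line $y = vx$; equivalently, after swapping the roles of the axes, they are paths from $(0,0)$ to $(vd,d)$ staying weakly above $y = x/v$, i.e. generalized (Fuss) Dyck paths whose number of North steps is $v$ times the number of East steps that precede any given point. This is exactly the setting of the Fuss--Catalan numbers $\frac{1}{vd+1}\binom{(v+1)d}{d}$, which count lattice paths in a $d \times vd$ rectangle staying weakly under a line of slope $v$. I would invoke the classical cycle lemma (Dvoretzky--Motzkin), or equivalently the rotation/reflection argument: among the $\binom{(v+1)d}{d}$ total paths using $d$ East and $vd$ North steps, grouping by cyclic rotation of the step sequence shows that exactly one in every $vd+1$ cyclic shifts yields a path staying weakly under $y = vx$. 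This gives the count $\frac{1}{vd+1}\binom{(v+1)d}{d}$, matching the cited formula \cite{Fuss1791}.

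The main obstacle, such as it is, lies entirely in the path-counting step rather than in the reduction: one must be careful that the boundary condition ``weakly under $y = vx$'' and the exclusion of the basepoint in the cycle lemma are set up so that precisely one representative per orbit is admissible. Concretely, I would append a single East step to close each path off at the lattice point $(d, vd)$, apply the cycle lemma to the resulting sequence of $d$ East and $vd$ North steps (whose total length $vd + d$ together with the one extra step gives the divisor $vd+1$), and check that the ``good'' rotations are counted without over- or undercounting. Once this is verified, the three quantities in the Corollary are all equal to $\binom{d(v+1)}{d}/(vd+1)$, completing the argument.
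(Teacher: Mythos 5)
Your first paragraph is, in substance, the paper's entire proof: the paper disposes of this corollary in one sentence, invoking the bijection of Theorem \ref{av132} (with all statistics specialized to $1$) and the trivial Wilf equivalence of Proposition \ref{Wilf}. For the last equality, however, the paper proves nothing at all: $|\text{Dyck}_{v,d}| = \binom{d(v+1)}{d}/(vd+1)$ is treated as the classical Fuss--Catalan count, which is exactly why the citation \cite{Fuss1791} sits in the statement of the corollary itself. So your cycle-lemma derivation is content beyond what the paper supplies, and the cycle lemma is indeed the standard tool for that count.

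As written, though, that step contains a concrete error. The claim that ``exactly one in every $vd+1$ cyclic shifts yields a path staying weakly under $y = vx$'' cannot be correct for the unaugmented sequences of $d$ East and $vd$ North steps: these have length $(v+1)d$, and the proportion of good rotations is not even constant across cyclic classes. For $v=1$, $d=2$, the orbit of $EENN$ contains one good sequence out of four, while the orbit of $ENEN$ contains one good sequence out of two; only the average over all $\binom{4}{2}=6$ sequences equals $1/3 = 1/(vd+1)$. Making the proportion uniform is precisely what the augmentation is for, and there your arithmetic slips: appending one step (a North step in the original orientation, equivalently an East step in your axis-swapped picture --- in both cases the majority direction) produces a sequence of length $(v+1)d+1 = vd+d+1$, \emph{not} $vd+1$, and the cycle lemma then gives exactly one dominating rotation in each cyclic class of that size. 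The resulting count is $\frac{1}{(v+1)d+1}\binom{(v+1)d+1}{d}$, and the divisor $vd+1$ of the corollary appears only after the algebraic simplification $\frac{1}{(v+1)d+1}\binom{(v+1)d+1}{d} = \frac{1}{vd+1}\binom{(v+1)d}{d}$. You did flag this as the delicate point, and the repair is routine, but the divisor claim as stated is false; the alternative, which is all the paper does, is simply to cite \cite{Fuss1791} for this classical fact.
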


\begin{proof}
These equalities hold by the bijection in Theorem \ref{av132} and trivial Wilf equivalence from Proposition \ref{Wilf}.
\end{proof}

\subsection{The patterns 231 and 312}

 Consider $D$ in $\mathcal{D}_{v,d}(231)$, and suppose label $vd$ occurs in position $k$. Then for all $i < k$ and for all $j>k$, $a_i < a_j$. Consequently, if label $vd$ is in position $k$, then labels $(1,\ldots,k-1)$ appear in positions $(1,\ldots,k-1)$. We define $\mathcal{D}_{v,j}^d$ to be the collection of labelled diamonds for $d-1$ full diamonds with $v$ vertices each followed by an incomplete diamond with $j$ vertices for $j = 1,\ldots, v-1$. Likewise $\mathcal{D}_{v,j}^d(p)$ are those diamonds that avoid pattern $p$. Note, when $j=1$ there exist no order relations in the final partial diamond. An example is shown in Figure \ref{tikz2}.

\begin{figure}
\begin{center}
\begin{tikzpicture}

	\coordinate (a) at (2,-1);
	\coordinate (b) at (1,0);
	\coordinate (c) at (2,1);
	\coordinate (d) at (3,0);
    \coordinate (e) at (1.5,0);
    \coordinate (f) at (1.9,0);
    \coordinate (g) at (2.25,0);
    \coordinate (h) at (2.6,0);

	\draw (a) -- (b) -- (c) -- (d) -- (a);
    \draw (a) -- (e) -- (c);

	\foreach \i in {a,b,c,d,e}
		\fill (\i) circle (2pt);
    \foreach \i in {f,g,h}
        \fill (\i) circle (.75pt);
\end{tikzpicture}
\hspace{.3cm}
\begin{tikzpicture}

    \coordinate (a) at (1,-1.065);
    \coordinate (b) at (1,0);
	\coordinate (c) at (1.35,0);
	\coordinate (d) at (1.7,0);

    \foreach \i in {a}
        \fill (\i) circle (0pt);
    \foreach \i in {b,c,d}
        \fill (\i) circle (.75pt);

\end{tikzpicture}
\hspace{.3cm}
\begin{tikzpicture}

	\coordinate (a) at (2,-1);
	\coordinate (b) at (1,0);
	\coordinate (c) at (2,1);
	\coordinate (d) at (3,0);
    \coordinate (e) at (1.5,0);
    \coordinate (f) at (1.9,0);
    \coordinate (g) at (2.25,0);
    \coordinate (h) at (2.6,0);

	\draw (a) -- (b) -- (c) -- (d) -- (a);
    \draw (a) -- (e) -- (c);

	\foreach \i in {a,b,c,d,e}
		\fill (\i) circle (2pt);
    \foreach \i in {f,g,h}
        \fill (\i) circle (.75pt);
\end{tikzpicture}
\hspace{.3cm}
\begin{tikzpicture}

	\coordinate (a) at (2,-1);
	\coordinate (b) at (1,0);
	\coordinate (d) at (3,0);
    \coordinate (e) at (1.5,0);
    \coordinate (f) at (1.9,0);
    \coordinate (g) at (2.25,0);
    \coordinate (h) at (2.6,0);

	\draw (a) -- (b);
    \draw (a) -- (d);
    \draw (a) -- (e);

	\foreach \i in {a,b,d,e}
		\fill (\i) circle (2pt);
    \foreach \i in {f,g,h}
        \fill (\i) circle (.75pt);
\end{tikzpicture}
\end{center}\caption{An unlabelled member of $\mathcal{D}_{v,j}^d$ for $d /geq 3$, $v /geq 5$, and $j/geq 4$.}\label{tikz2}
\end{figure}
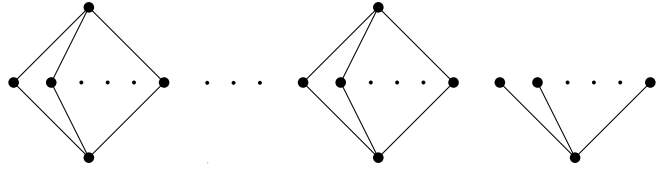

$\alpha_{v,j}^{d}(x)$ (or sometimes simply $\alpha_{v,j}^d$ for brevity) is the generating function for descents in $\mathcal{D}_{v,j}^d(231)$. In other words, \[\alpha_{v,j}^{d}(x) = \sum_{D\in\mathcal{D}_{v,j}^d(231)}x^{des(\pi_D)}.\]


For example, $\mathcal{D}_{5,1}^{2}(231)$ contains the diamonds with the following associated permutations: $123456,$ $124356,$ $142356,$ $132456,$ $143256,$ $123465,$ $124365,$ $142365,$ $132465,$ $143265.$ Counting descents in these ten permutations gives the generating function for descents $\alpha_{5,1}^2(x) = 1+4x+4x^2+x^3.$
\begin{theorem} \label{av231}
\[{f}_{v,d}^{(231)}(x,1)=\alpha_{v,v}^d(x)=\alpha_{v,(v-1)}^{d}+x\displaystyle \sum\limits_{i=1}^{d-1}\alpha_{v,(v-1)}^{i}\alpha_{v,v}^{d-i}\] where \[\alpha_{v,j}^{1}=
\begin{cases}
    1,& \text{if } j=1\\
    C_{j-1},& \text{if } j=2,\ldots,v-1\\
    C_{v-2},& \text{if } j=v
\end{cases}.\]
and $C_i$ is the $i^{th}$ Catalan number.
\end{theorem}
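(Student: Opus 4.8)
The plan is to peel off the global maximum $vd$ and read a recursive decomposition directly off each $231$-avoiding collection. As noted just before the statement, if $vd$ sits in position $k$ of $\pi_D$ then every entry to its left is smaller than every entry to its right; since $vd$ is the largest label it must occupy the top vertex of whichever diamond contains it, say diamond $m$ counting from the left. The top vertex is the last entry read within its diamond, so everything in diamond $m$ except $vd$ itself, together with all of diamonds $1,\dots,m-1$, lies to the left of position $k$, while diamonds $m+1,\dots,d$ lie entirely to the right.

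First I would identify the two pieces with smaller instances. The left piece consists of $m-1$ complete diamonds followed by diamond $m$ stripped of its top, i.e.\ a bottom vertex together with its $v-2$ middle vertices, which is exactly a $(v-1)$-vertex incomplete diamond; relabelling order-isomorphically by $1,\dots,k-1$ turns it into an element of $\mathcal{D}_{v,v-1}^{m}(231)$. The right piece is $d-m$ complete diamonds, an element of $\mathcal{D}_{v,v}^{d-m}(231)$ after order-isomorphic relabelling. The crucial point, which I would prove next, is that no $231$ pattern can straddle position $k$: the entry $vd$ cannot be the ``$3$'' of such a pattern (its required smaller final term would lie to its right, yet all right-hand entries exceed all left-hand ones), and any would-be $231$ using entries from both sides fails the same inequality. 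Hence $\pi_D$ avoids $231$ if and only if both pieces do, so this assignment is a bijection between $\mathcal{D}_{v,v}^{d}(231)$ and $\bigsqcup_{m=1}^{d}\mathcal{D}_{v,v-1}^{m}(231)\times\mathcal{D}_{v,v}^{d-m}(231)$, the right factor being empty when $m=d$.

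Next I would track descents across this bijection. Descents internal to the two pieces are preserved by order-isomorphic relabelling and are therefore recorded by $\alpha_{v,v-1}^{m}$ and $\alpha_{v,v}^{d-m}$. At the seams, the entry immediately before $vd$ is smaller than $vd$ and contributes an ascent, whereas the entry immediately after $vd$ (the bottom of diamond $m+1$) is smaller than $vd$ and contributes one descent --- but only when a right piece exists, i.e.\ when $m<d$. Summing the monomials $x^{\mathrm{des}}$ over all $D$ and splitting off the term $m=d$ (no seam descent, empty right piece) from the terms $1\le m\le d-1$ (one seam descent, hence a factor $x$) yields exactly
\[\alpha_{v,v}^{d}=\alpha_{v,v-1}^{d}+x\sum_{i=1}^{d-1}\alpha_{v,v-1}^{i}\,\alpha_{v,v}^{d-i},\]
with $i=m$, which is the claimed identity and equals $f^{(231)}_{v,d}(x,1)$ by definition.

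Finally I would establish the $d=1$ anchors by direct inspection of a single $j$-vertex diamond. Its least element is read first and its middle vertices carry no mutual order relations, so they may be filled by any arrangement of the remaining labels; because a leading minimum, being smallest and first, and a trailing maximum (present only when $j=v$), being largest and last, cannot participate in any $231$ pattern, avoiding $231$ is equivalent to the middle arrangement avoiding $231$. There are $C_{j-1}$ such arrangements for $1\le j\le v-1$ and $C_{v-2}$ when $j=v$, giving the stated base cases. The main obstacle is the straddling argument of the second step: one must verify carefully that the before-is-smaller-than-after inequality rules out every way a $231$ pattern could use entries on both sides of $vd$, since this is precisely what makes the decomposition multiplicative and the recursion valid; the seam-descent bookkeeping, namely the single $x$ appearing exactly when $m<d$, is the other point demanding care.
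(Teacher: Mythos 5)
Your proof is correct and follows essentially the same approach as the paper: both decompose a $231$-avoiding collection according to which diamond carries the maximum label $vd$, splitting into a left piece in $\mathcal{D}_{v,v-1}^{m}(231)$ and a right piece in $\mathcal{D}_{v,v}^{d-m}(231)$, with a factor of $x$ for the seam descent exactly when $m<d$. The only difference in scope is that the paper's proof simultaneously derives the companion recursions for the partial-diamond quantities $\alpha_{v,j}^{d}$, $j=1,\dots,v-1$ (needed to actually unwind the recursion, as in its tables), whereas you prove just the stated identity together with the base cases --- which is all the statement itself requires, and you justify the multiplicativity (the straddling argument) and the base cases more carefully than the paper does.
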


\begin{proof}
We proceed by partitioning elements of $\mathcal{D}_{v,j}^d(231)$ by where the largest label occurs. Let $m=v(d-1)+j$ be the largest label in $(d-1)$ diamonds with $v$ vertices followed by an incomplete diamond with $j$ vertices.

Now, assume $j=1.$ The $m$ label can appear on the final element or on the greatest element of any of the full diamonds. When $m$ occurs on the final least element there are $(d-1)$ diamonds with $v$ vertices that precede $m,$ so we then have $\alpha_{v,v}^{d-1}$ as the generating function for descents (gfd) for the vertices before $m$ that will avoid $231.$ When $m$ appears on the greatest element of the $i^{th}$ complete diamond, $1/leq i/leq d-1$, we have $\alpha_{v, v-1}^i$ as the gfd for the vertices before $m$, and $\alpha_{v,1}^{d-i}$ as the gfd for the vertices following $m$. Because we have created a descent from $m$ to the least element of the next diamond or partial diamond, we must also multiply by $x$ to account for this extra descent.

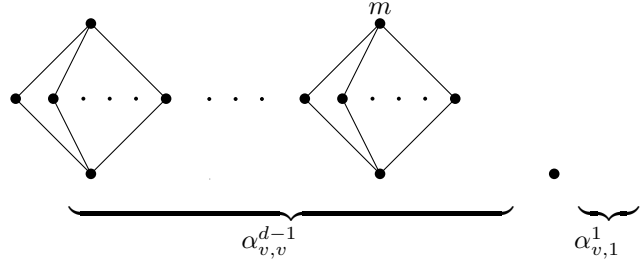
\begin{figure}
\begin{center}
\begin{tikzpicture}

	\coordinate (a) at (2,-1);
	\coordinate (b) at (1,0);
	\coordinate (c) at (2,1);
	\coordinate (d) at (3,0);
    \coordinate (e) at (1.5,0);
    \coordinate (f) at (1.9,0);
    \coordinate (g) at (2.25,0);
    \coordinate (h) at (2.6,0);

	\draw (a) -- (b) -- (c) -- (d) -- (a);
    \draw (a) -- (e) -- (c);

	\foreach \i in {a,b,c,d,e}
		\fill (\i) circle (2pt);
    \foreach \i in {f,g,h}
        \fill (\i) circle (.75pt);
\end{tikzpicture}
\hspace{.3cm}
\begin{tikzpicture}

    \coordinate (a) at (1,-1.065);
    \coordinate (b) at (1,0);
	\coordinate (c) at (1.35,0);
	\coordinate (d) at (1.7,0);

    \foreach \i in {a}
        \fill (\i) circle (0pt);
    \foreach \i in {b,c,d}
        \fill (\i) circle (.75pt);

\end{tikzpicture}
\hspace{.3cm}
\begin{tikzpicture}

	\coordinate (a) at (2,-1);
	\coordinate (b) at (1,0);
	\coordinate[label=above: $m$] (c) at (2,1);
	\coordinate (d) at (3,0);
    \coordinate (e) at (1.5,0);
    \coordinate (f) at (1.9,0);
    \coordinate (g) at (2.25,0);
    \coordinate (h) at (2.6,0);

	\draw (a) -- (b) -- (c) -- (d) -- (a);
    \draw (a) -- (e) -- (c);

	\foreach \i in {a,b,c,d,e}
		\fill (\i) circle (2pt);
    \foreach \i in {f,g,h}
        \fill (\i) circle (.75pt);
\end{tikzpicture}
\hspace{1cm}
\begin{tikzpicture}

	\coordinate (a) at (2,-1);

	\foreach \i in {a}
		\fill (\i) circle (2pt);

\end{tikzpicture}
\end{center}
\vspace{-0.3cm}
\hspace{4.6cm}
$\underbrace{\hspace{5.9cm}}$
\hspace{0.7cm}
$\underbrace{\hspace{0.8cm}}$\\
\vspace{-0.3cm}
\hspace{6.8cm} $\alpha_{v,v}^{d-1}$
\hspace{3.5cm} $\alpha_{v,1}^1$\\
\caption{$\alpha_{v,1}^d$ when $m$ appears on the greatest element of the last full diamond.}
\end{figure}

Hence \[\alpha_{v,1}^{d}(x)=\alpha_{v,v}^{d-1}+x\displaystyle \sum\limits_{i=1}^{d-1}\alpha_{v,(v-1)}^{i}\alpha_{v,1}^{d-i}.\]

Now, assume we have $(d-1)$ diamonds followed by an incomplete diamond with $j$ vertices where $j=2,\ldots,v-1.$ The $m^{th}$ element can appear on any of the interior vertices but not on the least element of the incomplete diamond, or $m$ can appear on the greatest element of any complete diamond. When $m$ appears on any of the interior vertices of the final diamond we need to count the descents before $m,$ after $m,$ and from $m$ itself. The descents that occur before $m$ can be counted by $\alpha_{v,j-g}^{d}$ where $g$ is the number of interior vertices following $m$ including $m.$ The descents following $m$ are counted by $\alpha_{v,g}^{1}$ because the same number of descents can occur in the remaining interior vertices as when we have a single incomplete diamond. We then count the descent that results from $m$ by multiplying our gfd by $x,$ but we do not get a descent from $m$ when it appears on the final interior vertex. We then sum over all possible values of $g$ to give us the gfd when $m$ appears on the interior vertices of the final diamond which gives us $\alpha_{v,j-1}^{d}\alpha_{v,1}^{1}+x\displaystyle \sum\limits_{g=2}^{j-1}\alpha_{v,j-g}^{d}\alpha_{v,g}^{1}.$

\begin{figure}
\begin{center}
\begin{tikzpicture}

	\coordinate (a) at (2,-1);
	\coordinate (b) at (1,0);
	\coordinate (c) at (2,1);
	\coordinate (d) at (3,0);
    \coordinate (e) at (1.5,0);
    \coordinate (f) at (1.9,0);
    \coordinate (g) at (2.25,0);
    \coordinate (h) at (2.6,0);

	\draw (a) -- (b) -- (c) -- (d) -- (a);
    \draw (a) -- (e) -- (c);

	\foreach \i in {a,b,c,d,e}
		\fill (\i) circle (2pt);
    \foreach \i in {f,g,h}
        \fill (\i) circle (.75pt);
\end{tikzpicture}
\hspace{.3cm}
\begin{tikzpicture}

    \coordinate (a) at (1,-1.065);
    \coordinate (b) at (1,0);
	\coordinate (c) at (1.35,0);
	\coordinate (d) at (1.7,0);

    \foreach \i in {a}
        \fill (\i) circle (0pt);
    \foreach \i in {b,c,d}
        \fill (\i) circle (.75pt);

\end{tikzpicture}
\hspace{.3cm}
\begin{tikzpicture}

	\coordinate (a) at (2,-1);
	\coordinate (b) at (1,0);
	\coordinate (c) at (2,1);
	\coordinate (d) at (3,0);
    \coordinate (e) at (1.5,0);
    \coordinate (f) at (1.9,0);
    \coordinate (g) at (2.25,0);
    \coordinate (h) at (2.6,0);

	\draw (a) -- (b) -- (c) -- (d) -- (a);
    \draw (a) -- (e) -- (c);

	\foreach \i in {a,b,c,d,e}
		\fill (\i) circle (2pt);
    \foreach \i in {f,g,h}
        \fill (\i) circle (.75pt);
\end{tikzpicture}
\hspace{.3cm}
\begin{tikzpicture}

	\coordinate (a) at (2,-1);
	\coordinate (b) at (1,0);
	\coordinate (d) at (3,0);
    \coordinate[label=above: $m$] (e) at (1.5,0);
    \coordinate (f) at (1.9,0);
    \coordinate (g) at (2.25,0);
    \coordinate (h) at (2.6,0);

	\draw (a) -- (b);
    \draw (a) -- (d);
    \draw (a) -- (e);

	\foreach \i in {a,b,d,e}
		\fill (\i) circle (2pt);
    \foreach \i in {f,g,h}
        \fill (\i) circle (.75pt);
\end{tikzpicture}
\end{center}

\vspace{-0.3cm}
\hspace{3.75cm}
$\underbrace{\hspace{6.9cm}}$
\hspace{0.15cm}
$\underbrace{\hspace{1.5cm}}$\\
\vspace{-0.3cm}
\hspace{6.5cm} $\alpha_{v,(j-g)}^{d}$
\hspace{3.2cm} $\alpha_{v,g}^1$\\
\caption{$\alpha_{v,j}^d$ when $m$ appears on the final (partial) diamond.}
\end{figure}
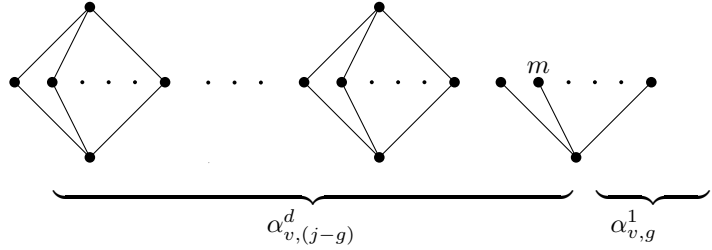

Also, $m$ can appear on the greatest element of any of the full diamonds. When $m$ appears on the greatest element of the $i^{th}$ complete diamond the gfd for vertices that appear before $m$ is $\alpha_{v,(v-1)}^{i}$ and $\alpha_{v,j}^{d-i}$ for the vertices following $m.$ We count the descent from $m$ by multiplying our gfd by $x.$ The total gfd when $m$ appears on the greatest element of the $i^{th}$ diamond is then $\alpha_{v,(v-1)}^{i}\alpha_{v,j}^{d-i}x.$

Thus \[\alpha_{v,j}^d(x)=\alpha_{v,j-1}^{d}\alpha_{v,1}^{1}+x\displaystyle \sum\limits_{g=2}^{j-1}\alpha_{v,j-g}^{d}\alpha_{v,g}^{1}+x\displaystyle \sum\limits_{i=1}^{d-1}\alpha_{v,(v-1)}^{i}\alpha_{v,j}^{d-i}\]
for $j=2,\ldots,v-1.$

Lastly, we look at when we have $d$ complete diamonds. The $m^{th}$ element can appear on any of the greatest elements. When $m$ appears on the greatest element of the last diamond, the gfd is $\alpha_{v,(v-1)}^{d}$ which counts descents before $m.$

When $m$ appears on the greatest element of the $i^{th}$ complete diamond ($1 \leq  i \leq d-1$), the gfd for vertices that appear before $m$ is $\alpha_{v,(v-1)}^{i}$ and $\alpha_{v,v}^{d-i}$ for vertices following $m.$ We count the descent from $m$ to the following least element by multiplying the gfd by $x.$ 

Hence \[\alpha_{v,v}^d(x)={f}_{v,d}^{(231)}(x,1)=\alpha_{v,(v-1)}^{d}+x\displaystyle \sum\limits_{i=1}^{d-1}\alpha_{v,(v-1)}^{i}\alpha_{v,v}^{d-i}.\]
\end{proof}

We can use this result to recursively obtain $f_{v,d}^{231}(x,1)$ for any $v$ and $d$.
\begin{corollary}
$f^{231}_{v,d}(1,y)\Bigr|_{y^d} = \alpha_{v,v}^{d}(1) = \left|\mathcal{D}_{v,d}(231)\right|$
\end{corollary}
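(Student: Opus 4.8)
The plan is to read off this corollary from the definitions together with Theorem \ref{av231}, without introducing any new construction. First I would unwind the definition of the generating function for descents: for fixed $v$ and $d$, every $D\in\mathcal{D}_{v,d}(231)$ contributes the monomial $x^{des(\pi_D)}y^d$, so
\[
f^{231}_{v,d}(x,y)=y^d\sum_{D\in\mathcal{D}_{v,d}(231)}x^{des(\pi_D)}=y^d\,f^{231}_{v,d}(x,1).
\]
The whole $y$-dependence is the single factor $y^d$, so taking the coefficient of $y^d$ merely strips this factor and returns $f^{231}_{v,d}(x,1)$.

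Next I would make the key identification tying Theorem \ref{av231} to the object being counted. By definition $\mathcal{D}_{v,j}^d$ is $d-1$ full diamonds followed by an incomplete diamond on $j$ vertices; when $j=v$ that final diamond is complete, so $\mathcal{D}_{v,v}^d=\mathcal{D}_{v,d}$ as labelled families. Consequently $\alpha_{v,v}^d(x)=\sum_{D\in\mathcal{D}_{v,d}(231)}x^{des(\pi_D)}=f^{231}_{v,d}(x,1)$, which is precisely the identity recorded in the statement of Theorem \ref{av231}.

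Finally I would set $x=1$ and chain the two asserted equalities. On the generating-function side, $f^{231}_{v,d}(1,y)=y^d\,\alpha_{v,v}^d(1)$, whose coefficient of $y^d$ is $\alpha_{v,v}^d(1)$; this yields the first equality. On the combinatorial side, setting $x=1$ collapses the descent statistic, so $\alpha_{v,v}^d(1)=\sum_{D\in\mathcal{D}_{v,d}(231)}1=\left|\mathcal{D}_{v,d}(231)\right|$, which yields the second. The number $\alpha_{v,v}^d(1)$ is then computed in practice by evaluating the recursion of Theorem \ref{av231} at $x=1$ and iterating over $d$ from the stated base cases $\alpha_{v,j}^1$.

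I do not expect a genuine obstacle: every step is either a substitution or a reading of a definition. The only place demanding care is the bookkeeping of the $y^d$ factor and the observation that the $j=v$ instance of $\mathcal{D}_{v,j}^d$ recovers the full family $\mathcal{D}_{v,d}$, which is exactly what guarantees that the recursion of Theorem \ref{av231} enumerates the avoiders we want rather than a partial-diamond auxiliary quantity.
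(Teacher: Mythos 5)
Your proposal is correct and matches the paper, which states this corollary without proof as an immediate consequence of Theorem \ref{av231}: the factor $y^d$ is the entire $y$-dependence of $f^{231}_{v,d}(x,y)$, the $j=v$ case of $\mathcal{D}_{v,j}^d$ is just $\mathcal{D}_{v,d}$, and setting $x=1$ turns the descent generating function $\alpha_{v,v}^d$ into the plain count. Your write-up simply makes explicit the definitional unwinding the paper leaves to the reader.
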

Tables \ref{table231} and \ref{table231b} are an example of the steps of such a computation for $\alpha_{5,5}^3(x)$ and $\mathcal{D}_{5,3}(231).$

\begin{table}
\begin{center}
\begin{tabular}{ |p{.65cm}||p{1.7cm}|p{3.8cm}|p{4.2cm}|  }
 \hline
 \multicolumn{4}{|c|}{v=5} \\
 \hline
 d & 1 & 2 & 3 \\
 \hline
 $\alpha_{5,1}^{d}$ & $1$        & $1+4x+4x^2+x^3$                & $1+13x+54x^2+95x^3+74x^4+25x^5+3x^6$ \\
 \hline
 $\alpha_{5,2}^{d}$ & $1$        & $1+5x+7x^2+2x^3$               & $1+15x+72x^2+149x^3+138x^4+53x^5+7x^6$ \\
 \hline
 $\alpha_{5,3}^{d}$ & $1+x$      & $1+7x+15x^2+10x^3+2x^4$        & $1+18x+106x^2+281x^3+362x^4+225x^5+65x^6+7x^7$ \\
 \hline
 $\alpha_{5,4}^{d}$ & $1+3x+x^2$ & $1+10x+31x^2+36x^3+15x^4+2x^5$ & $1+22x+161x^2+544x^3+938x^4+840x^5+383x^6+84x^7+7x^8$ \\
 \hline\hline
 $\alpha_{5,5}^{d}$ & $1+3x+x^2$ & $1+11x+37x^2+47x^3+21x^4+3x^5$ & $1+24x+188x^2+677x^3+1246x^4+1193x^5+579x^6+135x^7+12x^8$ \\
 \hline
\end{tabular}
\end{center}\caption{The recursive steps necessary to find the generating function for descents in $\mathcal{D}_{5,3}(231).$}\label{table231}
\end{table}

\begin{table}
\begin{center}
\begin{tabular}{ |p{1cm}||p{2cm}|p{2cm}|p{2cm}|  }
 \hline
 \multicolumn{4}{|c|}{v=5} \\
 \hline
 d & 1 & 2 & 3 \\
 \hline
 $\alpha_{5,1}^{d}$ & $1$ & $10$  & $265$ \\
 \hline
 $\alpha_{5,2}^{d}$ & $1$ & $15$  & $435$ \\
 \hline
 $\alpha_{5,3}^{d}$ & $2$ & $35$  & $1065$ \\
 \hline
 $\alpha_{5,4}^{d}$ & $5$ & $95$  & $2980$ \\
 \hline\hline
 $\alpha_{5,5}^{d}$ & $5$ & $120$ & $4055$ \\
 \hline
\end{tabular}
\end{center}\caption{The total number of permutations for $\mathcal{D}_{5,d}$ that avoid the pattern $231$ for $d=1,2,3$.}\label{table231b}
\end{table}

\begin{corollary} \label{av312}
$\left|\mathcal{D}_{v,d}(231)\right| = \left|\mathcal{D}_{v,d}(312)\right|.$
\end{corollary}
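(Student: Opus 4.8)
The plan is to obtain this equality as an immediate consequence of the reverse-complement bijection recorded in Proposition \ref{Wilf}, rather than by setting up a fresh recursion for $312$ analogous to the one in Theorem \ref{av231}. The key computation is to identify the reverse-complement of the pattern $231$. Taking the complement of $231$ (replacing each letter $j$ by $4-j$) yields $213$, and reversing $213$ yields $312$; equivalently, reversing $231$ first gives $132$, whose complement is again $312$. Hence $(231)^{rc}=312$, and I would record this arithmetic explicitly since the whole argument hinges on pairing $231$ with the correct partner pattern.

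With this in hand, I would apply Proposition \ref{Wilf} directly. That proposition guarantees both that the reverse-complement of any labelled collection of diamonds is again a legal element of $\mathcal{D}_{v,d}$ and that $\mathcal{D}_{v,d}(p)^{rc}=\mathcal{D}_{v,d}(p^{rc})$. Specializing to $p=231$ gives
\[
\mathcal{D}_{v,d}(231)^{rc}=\mathcal{D}_{v,d}\big((231)^{rc}\big)=\mathcal{D}_{v,d}(312).
\]
Since $\pi\mapsto\pi^{rc}$ is an involution on permutations, and by Proposition \ref{Wilf} it restricts to a well-defined map on $\mathcal{D}_{v,d}$, the assignment $D\mapsto\pi_D^{rc}$ is a bijection between $\mathcal{D}_{v,d}(231)$ and $\mathcal{D}_{v,d}(312)$. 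Comparing cardinalities on the two sides of the displayed equality then yields $\left|\mathcal{D}_{v,d}(231)\right|=\left|\mathcal{D}_{v,d}(312)\right|$, which is precisely the Wilf-equivalence $231\sim_W 312$ already foreshadowed in the list following Proposition \ref{Wilf}.

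There is essentially no remaining obstacle, because all of the structural content — that reverse-complementation sends a legal three-level task-precedence poset labelling to another legal one — has already been absorbed into Proposition \ref{Wilf}; the corollary is really just an instance of that proposition. The only point requiring care is the order of the reverse and complement operations, which is why I would verify $(231)^{rc}=312$ by hand rather than quote it. As a bonus, combining this bijection with Theorem \ref{av231} immediately transfers the entire descent generating function, and hence the enumeration appearing in Table \ref{seqs}, from the $231$-avoiding diamonds to the $312$-avoiding ones.
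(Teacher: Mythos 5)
Your proposal is correct and follows exactly the paper's approach: the paper's own proof is the one-line invocation of Proposition \ref{Wilf}, which establishes the $d$-Wilf-equivalence $231 \sim_W 312$ via the reverse-complement bijection. Your version merely makes explicit the computation $(231)^{rc}=312$ and the resulting bijection $D \mapsto \pi_D^{rc}$, which the paper leaves implicit in the list of equivalences following Proposition \ref{Wilf}.
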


\begin{proof}
By Proposition \ref{Wilf}, $231$ is $d$-Wilf-equivalent to $312.$
\end{proof}

\subsection{The pattern 321}

We were unable to find a closed formula for the pattern $321$. In Table \ref{table321}, we present the first few terms of the sequence and the first few generating functions for descents, which we found using Sage. We are confident that a technique recently used by Bevan, et.al. \cite{Shrubs} would be successful in this case too. Their technique involved refining a bivariate generating function via a statistic called last inversion foot, using a result of Bousquet-M\'{e}lou, and finding a functional equation, to eventually give a growth rate for the sequence. This suspicion was confirmed by Bevan, and in fact the sequence begins: 2, 106, 5976, 387564, 27247446, 2020632046, 155622020610, 12327937844924, 998103225615208, 82224228576059340 \cite{Bevan}. However the authors were unable, in the time available for this project, to learn all the tools necessary to enact the technique and so the problem remains officially open.

\begin{table}
\begin{center}
\begin{tabular}{ |p{1in}|p{1in}|p{3in}| }
\hline
 \multicolumn{3}{|c|}{v=4} \\
 \hline
\hline
$d$ & $|{\mathcal{D}_{v,d}(321)}|$ & $f_{v,d}^{321}(x)$ \\
\hline
 1 & 2 & $1+x$ \\ \hline
 2 & 106 & $1+71x+29x^2+5x^3$ \\ \hline
 3 & 5976 & $1+991x+2747x^2+1765x^3+430x^4+42x^5$ \\

\hline
\end{tabular}
\end{center}\caption{$|{\mathcal{D}_{v,d}(321)}|$ and $f_{v,d}^{321}(x)$ for $d = 1, 2, 3$.}\label{table321}
\end{table}

\section{Diamonds avoiding a pair of patterns of length 3}\label{Spair}

Next, we study pairs of patterns of length $3$.  While there are $15$ such pairs of patterns, we focus on the $8$ pairs of patterns $\sigma, \rho$ where $\left|\mathcal{D}_{v,d}(\sigma,\rho)\right|$ is non-trivial.

\subsection{Diamonds avoiding the set of patterns $132, 213$}

\begin{lemma}\label{lem132213} In order to avoid $132$ and $213$, the labels on each diamond must be increasing and consecutive.\end{lemma}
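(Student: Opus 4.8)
The plan is to split the statement into its two assertions --- that within each diamond the labels increase in reading order, and that the label set of each diamond is a block of consecutive integers --- and to dispatch them separately. The increasing claim comes essentially for free: since $\mathcal{D}_{v,d}(132,213)\subseteq\mathcal{D}_{v,d}(132)$, Lemma~\ref{lem132} already forces the labels on each diamond to appear in increasing order. (The reason, as recorded there, is that the least element opens each block and is the smallest label in its diamond, so any internal descent would complete a $132$ with that least element.) The real work is therefore to show, using $213$-avoidance, that each increasing block of $v$ labels is in fact consecutive.

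For the consecutive claim I would argue by contradiction. Each diamond $K$ contributes a contiguous block of $v$ positions to $\pi_D$, and all its labels lie between its least label $\ell_K$ and its greatest label $g_K$. If this block is not consecutive, then $g_K-\ell_K\geq v$, so some integer $w$ with $\ell_K<w<g_K$ is \emph{not} a label of $K$; it must belong to a different diamond. Since $\ell_K<w<g_K$ and the labels of $K$ increase along its block, I can choose labels $p,q$ of $K$ with $p<w<q$ and with $p$ occurring before $q$ in the reading order.

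The finish is a two-case split on where $w$ sits relative to the block of $K$. Because distinct diamonds occupy disjoint contiguous blocks, the block containing $w$ lies either entirely after block $K$ or entirely before it. If $w$ comes after, then reading the triple $p,q,w$ realizes the relative order $1,3,2$, a forbidden $132$; if $w$ comes before, then reading $w,p,q$ realizes the relative order $2,1,3$, a forbidden $213$. Either outcome contradicts the avoidance hypothesis, so no interior value can be missing and every block is consecutive.

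The step I expect to require the most care --- and the only genuine obstacle --- is the bookkeeping of block positions: confirming that the diamonds really do occupy disjoint contiguous runs of positions (so that $w$ is cleanly before or after $K$ with no interleaving), and checking that the two order types are exactly $132$ and $213$ rather than some neighboring pattern. What makes the argument work is precisely that \emph{both} forbidden patterns are invoked, one for each side; avoidance of a single pattern would rule out only gaps on one side and would not pin the block down to a consecutive run.
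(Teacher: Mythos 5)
Your proof is correct and takes essentially the same approach as the paper's: invoke Lemma~\ref{lem132} for the increasing claim, then observe that a missing interior value forms a $213$ with two of the diamond's labels if it occurs before the block, and a $132$ if it occurs after. The bookkeeping you flag as the delicate step (diamonds occupying disjoint contiguous runs, and the order types being exactly $132$ and $213$) is precisely what the paper's terser proof leaves implicit.
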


\begin{proof} By Lemma \ref{lem132}, the labels appear in increasing order on each diamond. Then any label ``missing" from consecutive labelling would either create  $213$ if it occurred before its surrounding labels, or a $132$ if it occurred after. Therefore the labels on each diamond must be consecutive and increasing. \end{proof}

\begin{theorem} \label{av132213}
$f^{132,213}_{v,d}(x,y)= \displaystyle \sum\limits_{d=1}^\infty \sum\limits_{\sigma \in \mathcal{D}(132,213)} y^dx^{des(\sigma)} = \frac{1-yx}{1-y(1+x)}.$
\end{theorem}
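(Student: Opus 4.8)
The plan is to reduce each pattern-avoiding diamond arrangement to an ordinary $\{132,213\}$-avoiding permutation of length $d$ in a descent-preserving way, and then to compute the descent generating function of that permutation class directly. First I would invoke Lemma \ref{lem132213}: in any $D\in\mathcal{D}_{v,d}(132,213)$ the $v$ labels of each diamond are increasing and consecutive. Since the labels of one diamond are read off in consecutive positions, each diamond occupies a block of $v$ consecutive positions of $\pi_D$ carrying a block of $v$ consecutive values, in increasing order. Collapsing each block to a single symbol and recording the relative value-order of the $d$ blocks defines a map $\Phi\colon\mathcal{D}_{v,d}(132,213)\to S_d$; conversely, any ordering of the $d$ consecutive value-intervals among the $d$ diamonds determines a unique legal labelling, so $\Phi$ is a bijection onto its image.

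Next I would show $\Phi$ restricts to a descent-preserving bijection onto $\mathrm{Av}_d(132,213)$, the length-$d$ permutations avoiding both patterns. For avoidance, I classify a length-$3$ subsequence of $\pi_D$ by how its entries fall among the blocks. Three entries in one block give $123$. Two entries $a_1<a_2$ in one block and a third entry $b$ in another block give only $123$, $231$, or $312$: because a block is an increasing run of consecutive values occupying consecutive positions, the outside entry $b$ lies wholly before or wholly after $a_1a_2$ and is either smaller than both or larger than both, and none of the four resulting arrangements is $132$ or $213$. Finally, three entries in three distinct blocks realize exactly the pattern of the three corresponding symbols of $\Phi(D)$. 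Hence $\pi_D$ contains $132$ or $213$ if and only if $\Phi(D)$ does. For descents, there are none inside a block, and a descent occurs between diamonds $i$ and $i+1$ exactly when block $i$'s maximal value exceeds block $i+1$'s minimal value, i.e. exactly when $\Phi(D)$ has a descent at position $i$; thus $\mathrm{des}(\pi_D)=\mathrm{des}(\Phi(D))$.

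It then remains to evaluate $G(x,y):=\sum_{d\ge 0}y^{d}\sum_{\sigma\in\mathrm{Av}_d(132,213)}x^{\mathrm{des}(\sigma)}$. Here I would use the structure of $\{132,213\}$-avoiders: if $n$ is the largest entry of $\sigma\in\mathrm{Av}_n$, then avoiding $132$ forces every entry before $n$ to exceed every entry after $n$, and avoiding $213$ forces the entries before $n$ to be increasing; so $\sigma$ is an increasing run of the top $k$ values followed by a $\{132,213\}$-avoider on the bottom $n-k$ values (straddling triples not using $n$ being automatically avoided for the same reason as above). The top run contributes no internal descent, and the junction into a nonempty remainder contributes exactly one descent. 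Translating this decomposition into generating functions yields
\[
G = 1 + \frac{y}{1-y}\bigl(1 + x(G-1)\bigr),
\]
whose unique solution is $G(x,y)=\dfrac{1-yx}{1-y(1+x)}$. By the descent-preserving bijection $\Phi$, this $G$ equals $\sum_{d}y^{d}\sum_{\sigma\in\mathcal{D}_{v,d}(132,213)}x^{\mathrm{des}(\sigma)}$ (the constant term $1$ recording the empty arrangement $d=0$), giving the claimed formula; equivalently one checks that the coefficient of $y^{d}$ is $(1+x)^{d-1}$ for $d\ge 1$.

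The step I expect to be the main obstacle is the structural decomposition of $\{132,213\}$-avoiders and its careful bookkeeping: one must verify that the two avoidance conditions genuinely pin down the top-run/remainder shape, that no cross pattern is introduced, and that the junction descent is counted once and only once while correctly handling the empty remainder. The block reduction and the descent transfer through $\Phi$ are then essentially routine case analysis, and the final functional-equation solve is mechanical.
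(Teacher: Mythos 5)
Your proof is correct, and at the top level it follows the same strategy as the paper: invoke Lemma~\ref{lem132213} to force each diamond to carry an increasing run of $v$ consecutive labels, reduce to counting $\{132,213\}$-avoiding arrangements of the $d$ blocks by descents, and then run a Simion--Schmidt-style recursion \cite{SimSchmidt}. The differences are in execution, and they matter. First, the paper compresses your entire second paragraph (the pattern- and descent-transfer analysis for the block-collapse map $\Phi$) into a single sentence; your case analysis of how a length-$3$ subsequence can straddle the blocks is exactly what is needed to justify that sentence. Second, and more substantially, your recursive step is genuinely different from the paper's: the paper asserts that the labels $v(d-1)+1,\ldots,vd$ must occur on either the first or the last diamond, contributing a factor $x$ or $1$ respectively, and iterates to get $(1+x)^{d-1}$; you instead condition on the position of the maximum, derive the layered structure (an increasing run of the top values followed by a smaller avoider), and solve the functional equation $G=1+\frac{y}{1-y}\bigl(1+x(G-1)\bigr)$. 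Your route is in fact the safer one: the paper's assertion is false as stated, since the block pattern $231$ avoids both $132$ and $213$ while its largest block sits in the middle; the correct structural fact is precisely the layered decomposition you prove, under which avoiders of length $d$ correspond to compositions of $d$ with descents equal to the number of parts minus one, giving $(1+x)^{d-1}$ immediately. So both arguments land on the same generating function, but yours closes a gap that the paper's sketch leaves open, at the cost of being longer.
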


\begin{proof}
By Lemma \ref{lem132213}, we know that the labels on each diamond are consecutive and increasing, so there is a diamond labelled $1$, $2$, $\ldots$, $v$, another labelled $v+1$, $\ldots$, $2v$, etc. So the only thing we must ensure is that the entire collection of diamonds avoids $132$ and $213$ between the respective diamonds. In their foundational paper, Simion and Schmidt enumerated permutations avoiding $132$ and $213$ \cite{SimSchmidt}, and the recursive nature of their proof can also be adapted to find our generating function for descents.

The labels $v(d-1) + 1, \ldots, vd$ must occur on either the first diamond, or the last. In the first case, they create a descent. In the second, they do not, giving a $(1+x)$ term in the generating function. We continue recursively and obtain:

\begin{center}
\begin{align*}
\sum_{D \in \mathcal{D}_{v,d}(132,213)} y^dx^{des(\sigma(D))} &= 1 + \sum\limits_{d=1}^\infty y^d(1+x)^{d-1}\\
&= 1 + \frac{1}{1+x}\left( -1 + \frac{1}{1-y(1+x)}\right)\\
&= \frac{x}{1+x} + \frac{1}{(1+x)(1-y(1+x))}\\
&= \frac{x(1-y(1+x))+1}{(1+x)(1-y(1+x))}\\
&= \frac{1-yx}{1-y(1+x).}
\end{align*}
\end{center}

\end{proof}

\begin{corollary}
$f^{132,213}_{v,d}(1,y)\Bigr|_{y^d} = \left|\mathcal{D}_{v,d}(132, 213)\right| = 2^{d-1}.$
\end{corollary}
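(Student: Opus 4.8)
The plan is to derive this directly from the generating function computed in Theorem \ref{av132213}. Recall that $f^{132,213}_{v,d}(x,y) = \frac{1-yx}{1-y(1+x)}$ records descents via the exponent of $x$ and the number of diamonds via the exponent of $y$. Setting $x=1$ collapses the descent statistic, so that each avoider is counted with weight exactly $1$ and the coefficient of $y^d$ records precisely $\left|\mathcal{D}_{v,d}(132,213)\right|$. This immediately justifies the first equality, and follows straight from the definition $f^P_{v,d}(x,y) = \sum_{D} x^{des}y^d$; the only point requiring a word of care is confirming that the substitution $x=1$ genuinely counts avoiders rather than weighting them by some function of their descent number, which it does.

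For the second equality, I would substitute $x=1$ into the closed form to obtain
\[
f^{132,213}_v(1,y) = \frac{1-y}{1-2y},
\]
and then extract the coefficient of $y^d$. The cleanest route is to write $\frac{1-y}{1-2y} = (1-y)\sum_{n\geq 0}(2y)^n$ and expand term by term. This yields coefficient $1$ at $y^0$ and coefficient $2^n - 2^{n-1} = 2^{n-1}$ at each $y^n$ with $n \geq 1$. Hence for every $d \geq 1$ the coefficient of $y^d$ equals $2^{d-1}$, as claimed.

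There is essentially no obstacle here: the genuine combinatorial content lies entirely in Theorem \ref{av132213}, and this corollary is a routine specialization followed by a coefficient extraction. The ``hard part,'' if any, is purely bookkeeping in the geometric-series expansion, and one could equally well obtain $2^{d-1}$ by a partial-fraction decomposition of $\frac{1-y}{1-2y}$ or by noting the rational function satisfies the recurrence reflected in $(1+x)^{d-1}$ at $x=1$.
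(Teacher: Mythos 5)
Your proposal is correct and matches the paper's (implicit) reasoning: the paper states this corollary without proof, treating it exactly as you do, namely as the specialization $x=1$ of Theorem \ref{av132213} followed by extraction of the coefficient of $y^d$ from $\frac{1-y}{1-2y}$. Your geometric-series expansion giving $2^d - 2^{d-1} = 2^{d-1}$ is a correct way to carry out that extraction (one could also read it off even faster from the intermediate form $1 + \sum_{d\geq 1} y^d(1+x)^{d-1}$ in the theorem's proof).
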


\subsection{Diamonds avoiding $132, 312$ and $213,231$}

\begin{lemma}\label{lem132312}
In order to avoid $132$ and $312$, the final diamond is labelled with either $v(d-1)+1, v(d-1)+2, \ldots, vd$ or $1, v(d-1)+2, \ldots, vd$
\end{lemma}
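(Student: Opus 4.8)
The plan is to reduce the joint avoidance of $132$ and $312$ to a simple structural condition and then read off the two possible labellings of the last diamond. First I would record the standard characterization: a permutation avoids both $132$ and $312$ if and only if every entry is either a left-to-right maximum or a left-to-right minimum. The reason is that both forbidden patterns place their \emph{median} value in the final position; so if some entry $\pi_k$ were neither a left-to-right maximum nor a left-to-right minimum, there would be an earlier larger entry and an earlier smaller entry, and according as the larger or the smaller of these two comes first we would read off a $312$ or a $132$ ending at position $k$. Conversely, if every entry is a left-to-right extreme, then the last entry of any triple is the maximum or minimum of that triple, never the median, so no $132$ or $312$ can occur. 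I would combine this with Lemma \ref{lem132}, which guarantees that the labels read off any single diamond are increasing; in particular the $v$ entries of $\pi_D$ coming from the final diamond form an increasing subsequence $a_0 < a_1 < \cdots < a_{v-1}$, where $a_0$ is the least element (occurring at position $v(d-1)+1$) and $a_{v-1}$ the greatest (occurring last).

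Next I would split on the status of $a_0$, the least element of the final diamond, which by the characterization must be a left-to-right maximum or a left-to-right minimum. If $a_0$ is a left-to-right maximum, then it exceeds all $v(d-1)$ earlier entries; since $a_1, \ldots, a_{v-1}$ are larger than $a_0$ they also exceed every earlier entry, so the final diamond carries the $v$ largest labels, forcing (by the increasing order within the diamond) the labelling $v(d-1)+1, v(d-1)+2, \ldots, vd$. If instead $a_0$ is a left-to-right minimum, then it is smaller than everything before it, and since the final diamond is increasing and nothing follows it, it is also smaller than $a_1, \ldots, a_{v-1}$; hence $a_0$ is the global minimum and $a_0 = 1$.

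In the remaining case $a_0 = 1$ I would pin down the other $v-1$ labels. Each of $a_1, \ldots, a_{v-1}$ is preceded by $a_0 = 1$ and is strictly larger than it, so none of them can be a left-to-right minimum; by the characterization each must therefore be a left-to-right maximum. In particular every one of these $v-1$ entries exceeds all $v(d-1)$ labels appearing in the first $d-1$ diamonds. The labels other than $1$ are exactly $\{2, \ldots, vd\}$, of which $v(d-1)$ lie in the first $d-1$ diamonds and $v-1$ lie in the final diamond; since the latter $v-1$ are each larger than all of the former, they must be the $v-1$ largest available labels, namely $v(d-1)+2, \ldots, vd$, arranged in increasing order. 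This yields the second labelling $1, v(d-1)+2, \ldots, vd$ and completes the dichotomy.

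I expect the only delicate point to be the left-to-right-minimum case: the argument that once $a_0 = 1$ the remaining diamond entries are forced to be left-to-right maxima (and hence the top $v-1$ labels) is where the interaction between the poset constraint that the least element is read first and the pattern constraint is really used. The rest is bookkeeping that follows immediately from the characterization and Lemma \ref{lem132}.
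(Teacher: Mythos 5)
Your proof is correct, and it takes a genuinely different route from the paper's. The paper argues by exhibiting explicit forbidden triples: $vd$ must lie on the final diamond (else a $312$ appears), the last $v-1$ vertices of that diamond must then be $v(d-1)+2,\ldots,vd$, and if the least element of the last diamond were some $j \notin \{1, v(d-1)+1\}$, then the first vertex $f$ of the diamond containing $v(d-1)+1$, followed by $v(d-1)+1$ and $j$, is claimed to form a $132$. You instead prove the characterization that a permutation avoids $\{132,312\}$ exactly when every entry is a left-to-right maximum or a left-to-right minimum, combine it with Lemma \ref{lem132}, and split on the status of $a_0$. What your route buys is uniformity and, in fact, correctness at the paper's final step: the paper's witness is a $132$ only when $f<j$, which can fail when the label $1$ does not lie on the same diamond as $v(d-1)+1$. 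For instance, with $v=4$, $d=3$ and diamonds $(3,4,5,9)$, $(1,6,7,8)$, $(2,10,11,12)$, one has $j=2$ and $f=3$, so the paper's triple $(3,9,2)$ reduces to $231$, not $132$; the actual violation there is $(3,1,2)$, a $312$ through the label $1$. Your left-to-right-minimum case --- once $a_0=1$, the remaining entries of the last diamond are forced to be left-to-right maxima --- is exactly the uniform argument covering this situation: when $j\neq 1$, the label $1$ lies on an earlier diamond and forms a $132$ or a $312$ with $v(d-1)+1$ and $j$, according to which of $1$ and $v(d-1)+1$ is read first. (Similarly, the paper's middle step attributes the forcing of $v(d-1)+2,\ldots,vd$ to $132$ alone, but it silently needs $312$ as well; your characterization makes that interaction explicit.) The trade-off is length: the paper's argument is three sentences, whereas yours requires the preliminary characterization lemma --- though that lemma is standard, reusable, and is the same dichotomy that underlies the $2^{d-1}$ enumeration in Theorem \ref{av132312}.
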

\begin{proof} Since $v\geq 4$, the label $vd$ must appear on the final diamond in order to avoid $312$. Likewise the interior vertices on the final diamond must be in consecutive increasing order in order to avoid $132$, so the $v-1$ final vertices are $v(d-1) + 2, \ldots, vd$. If the label on the first vertex of the last diamond were some number $j$ other than $1$ or $v(d-1) +1$, then the first vertex of whichever diamond $v(d-1)+1$, along with $v(d-1)+1$, and $j$ would form a $132$.
\end{proof}

\begin{theorem} \label{av132312}
$f^{132,312}_{v,d}(x,y)= \displaystyle \sum\limits_{d=1}^\infty \sum\limits_{\sigma \in \mathcal{D}(132,312)} y^dx^{des(\sigma)} = \frac{1-yx}{1-y(1+x)}.$
\end{theorem}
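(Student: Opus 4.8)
The plan is to mirror the structure of the proof of Theorem \ref{av132213}, since the claimed generating function is identical and Lemma \ref{lem132312} plays exactly the role that Lemma \ref{lem132213} did there. First I would invoke Lemma \ref{lem132312} to pin down the structure of the final diamond: in order to avoid both $132$ and $312$, the labels $v(d-1)+2,\ldots,vd$ must sit on the interior and top vertices of the last diamond in increasing order, and the bottom vertex of that diamond must carry either $v(d-1)+1$ or $1$. These are precisely two cases, and I would argue that the remaining diamonds (carrying the labels not yet placed) again form a legal element of a $\mathcal{D}(132,312)$-type family on one fewer diamond, so that the enumeration is governed by a recursion of exactly the same shape as in the $\{132,213\}$ case.

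The key step is to track the descent contribution of each of the two cases. In the case where the bottom vertex of the last diamond is $v(d-1)+1$, the whole last block $v(d-1)+1,\ldots,vd$ is an increasing run appended after everything smaller, contributing no new descent; in the case where the bottom vertex is $1$, the large labels $v(d-1)+2,\ldots,vd$ get interleaved so as to create exactly one additional descent, contributing a factor of $x$. Together these give the $(1+x)$ per recursive step that produces the term $y^d(1+x)^{d-1}$, exactly as in Theorem \ref{av132213}. I would then sum the geometric-type series
\[
1 + \sum_{d=1}^\infty y^d (1+x)^{d-1}
\]
and simplify, obtaining $\frac{1-yx}{1-y(1+x)}$; since this algebraic simplification is word-for-word the computation already carried out in the proof of Theorem \ref{av132213}, I would either repeat it verbatim or simply cite that the two series coincide and hence yield the same closed form.

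The part requiring the most care is verifying that the recursion genuinely has the same $(1+x)$ structure, i.e.\ that removing the top block in either of the two allowed configurations leaves a valid, independently-counted element on $d-1$ diamonds with the inductive descent statistic correctly inherited. In particular I must confirm that the choice at each stage (bottom vertex $=1$ versus $=v(d-1)+1$) is genuinely free and independent across the recursion, and that the ``$1$'' option contributes exactly one descent rather than accidentally interacting with descents already counted in the smaller configuration. This bookkeeping of descents across the recursive peel-off is the main obstacle; once it is confirmed that each of the $d-1$ recursive steps independently contributes a factor $(1+x)$ exactly as in the $\{132,213\}$ analysis, the remainder is the identical algebraic simplification and the theorem follows. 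The $d$-Wilf-equivalence $132,312 \sim_W 213,231$ recorded after Proposition \ref{Wilf} then immediately extends the result to the pair $213,231$.
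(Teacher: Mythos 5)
Your proposal is correct and takes essentially the same approach as the paper: the paper's proof likewise invokes the structural lemma for the final diamond (its two possible labellings, one creating a descent with the previous diamond and one not), concludes that each additional diamond contributes a factor of $(1+x)$, and then reuses the series summation from Theorem \ref{av132213} verbatim. Your careful verification that either choice of final diamond leaves a valid, independently counted configuration on $d-1$ diamonds is exactly the bookkeeping the paper leaves implicit, so your write-up is simply a more detailed version of the same argument.
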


\begin{proof}
 We proceed similarly to the proof of Theorem \ref{av132213} with a recursive argument. By Lemma \ref{lem132213}, the final diamond has only two possibilities, one of which forms a descent with the previous diamond, and one of which doesn't. Thus our descent generating function gains a $(1+x)$ term for each additional diamond, and exactly as in Theorem \ref{av132213}, the result follows.
\end{proof}

\begin{corollary}\label{av213231}
$f^{132,312}_{v,d}(1,y)\Bigr|_{y^d} = \left|\mathcal{D}_{v,d}(132, 312)\right| = \left|\mathcal{D}_{v,d}(213, 231)\right| = 2^{d-1}.$
\end{corollary}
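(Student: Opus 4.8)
The plan is to read the closed form for $\left|\mathcal{D}_{v,d}(132,312)\right|$ directly off the generating function already established in Theorem \ref{av132312}, and to obtain the middle (Wilf) equality from Proposition \ref{Wilf}. No fresh combinatorial analysis of the diamonds themselves is required, since the substantive work has been done in those two earlier results.

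First I would justify the leftmost equality by appealing to the general framework laid out in the introduction: because $f^{132,312}_{v,d}(x,y)$ weights each avoider by $x^{des}y^d$, setting $x=1$ collapses the descent statistic (as $1^{des}=1$) and leaves each collection of $d$ diamonds weighted simply by $y^d$. Hence the coefficient of $y^d$ counts exactly the avoiders assembled from $d$ diamonds, which is precisely $\left|\mathcal{D}_{v,d}(132,312)\right|$.

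Next I would extract the closed form. Substituting $x=1$ into the expression from Theorem \ref{av132312} gives
\[
f^{132,312}_{v,d}(1,y) = \frac{1-y}{1-2y}.
\]
A one-line partial-fraction decomposition then yields
\[
\frac{1-y}{1-2y} = \frac{1}{2} + \frac{1}{2}\cdot\frac{1}{1-2y} = \frac{1}{2} + \frac{1}{2}\sum_{k=0}^{\infty}(2y)^k,
\]
so that for every $d\geq 1$ the coefficient of $y^d$ equals $\frac{1}{2}\cdot 2^{d} = 2^{d-1}$, establishing the final equality in the chain.

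Finally, for the middle equality I would invoke Proposition \ref{Wilf}: since $132^{rc}=213$ and $312^{rc}=231$, the reverse-complement map restricts to a bijection between $\mathcal{D}_{v,d}(132,312)$ and $\mathcal{D}_{v,d}(213,231)$, so $\left|\mathcal{D}_{v,d}(132,312)\right| = \left|\mathcal{D}_{v,d}(213,231)\right|$; this is exactly the $d$-Wilf-equivalence (c) recorded immediately after that proposition. The only step calling for any care is the coefficient extraction, and even that is routine partial fractions, so there is no genuine obstacle to overcome here — the corollary is a direct consequence of Theorem \ref{av132312} and Proposition \ref{Wilf}.
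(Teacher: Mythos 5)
Your proposal is correct and follows essentially the same route as the paper: the paper's own proof consists of the single observation that $213,231$ is $d$-Wilf-equivalent to $132,312$ by Proposition \ref{Wilf}, leaving the extraction of $2^{d-1}$ from Theorem \ref{av132312} implicit, exactly as you do via the substitution $x=1$ and the (correct) expansion of $\frac{1-y}{1-2y}$. Your write-up simply makes that routine coefficient computation explicit, which is a harmless elaboration rather than a different approach.
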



\begin{proof}
By Proposition \ref{Wilf}, $213,231$ is $d$-Wilf-equivalent to $132,312.$
\end{proof}

\subsection{Diamonds avoiding $132, 321$ or $213, 321$}

\begin{lemma}\label{lem132321} Any diamond avoiding $132$ and $321$ has at most one descent. Moreover, if there is a descent, it involves the label $1$.\end{lemma}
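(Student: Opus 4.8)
The plan is to analyze the structure of a single diamond's labelling under the two constraints simultaneously, since the statement concerns an individual diamond rather than the whole collection. Recall that a diamond has a least element, then $v-2$ interior vertices, then a greatest element, and these are read least-first, left-to-right by level. By Lemma \ref{lem132}, avoiding $132$ already forces the labels on each diamond to appear in increasing order; so within a single diamond avoiding both $132$ and $321$, the sequence of labels is already increasing, which by itself yields no descents at all. The content of the lemma must therefore concern how a diamond sits relative to the \emph{other} labels in the collection, i.e. descents created at the boundary where the diamond's least element follows the previous diamond's greatest element, or internally when a small label intrudes. So first I would pin down precisely what ``a diamond has a descent'' means here: a descent occurs at a position where a label exceeds the label immediately following it, and within the block of vertices comprising one diamond (read least, interiors, greatest) the only way to get an internal descent is for the least element to be larger than some interior label.

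Next I would argue the two claims in turn. For the ``at most one descent'' claim, suppose a diamond had two descents among its own labels $\ell_0$ (least), $\ell_1,\dots,\ell_{v-2}$ (interiors), $\ell_{v-1}$ (greatest) read in order. Since the interiors together with the greatest element are forced increasing (the least element is below all of them in the poset, and $132$-avoidance pins the rest), the only candidate for a descent is between $\ell_0$ and $\ell_1$, i.e.\ when $\ell_0 > \ell_1$. Thus I must show two descents is impossible, which reduces to showing that if $\ell_0 > \ell_1$ then the remaining labels cannot produce a second descent; the key tool is that two descents within an increasing-dominated block would force a decreasing subsequence of length three, i.e.\ a $321$ pattern, contradicting $321$-avoidance. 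Concretely, if there were a second descent we could extract three labels in strictly decreasing order (one from each ``drop''), giving $321$.

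For the second claim --- that any descent involves the label $1$ --- I would suppose the diamond has a descent, necessarily at the least-element position $\ell_0 > \ell_1$, and show $\ell_0$ forces $\ell_1$ to be the global minimum $1$. The idea is that $\ell_0$, being the least element of its diamond and larger than $\ell_1$, together with any label that is both smaller than $\ell_1$ and positioned after it would yield a $321$ (as $\ell_0 > \ell_1 > (\text{that label})$), while a label smaller than $\ell_1$ positioned before the diamond, together with something appropriate, would yield a $132$. So no label smaller than $\ell_1$ may appear anywhere, forcing $\ell_1 = 1$. I would carry this out by examining the two possible locations (before or after $\ell_1$ in the reading order) of a hypothetical label $m < \ell_1$ and deriving the forbidden pattern in each case.

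The main obstacle I expect is the bookkeeping in the second claim: carefully ruling out a label smaller than $\ell_1$ in \emph{every} position of the permutation, including on other diamonds both to the left and to the right, and making sure each case genuinely produces a $132$ or a $321$ with the correct relative order using $\ell_0$, $\ell_1$, and the intruding label. The increasing-on-each-diamond consequence of Lemma \ref{lem132} does most of the structural work, so the proof should be short once the case split by position of the offending small label is set up correctly.
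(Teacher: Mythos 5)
Your proposal has two genuine problems, one of scope and one of substance. On scope: in this lemma ``diamond'' refers, by the paper's stated abuse of notation, to an element of $\mathcal{D}_{v,d}(132,321)$, i.e.\ the full permutation $\pi_D$ of the whole collection of $d$ diamonds, not the block of $v$ labels of a single diamond. Within a single diamond's block there can never be any descent at all: the least element's label is forced below all interior labels by the poset relations, and Lemma \ref{lem132} makes the interiors and the greatest element increasing. In particular your ``only candidate'' internal descent $\ell_0 > \ell_1$ is impossible, so your setup for the first claim analyzes a configuration that cannot occur, while the descents the lemma actually concerns --- those at a boundary where one diamond's greatest element is followed by the next diamond's least element --- never enter your two-descent analysis.

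On substance: your key tool for ``at most one descent'' --- that two descents force three labels in strictly decreasing order, hence a $321$ --- is false. The permutation $2143$ has two descents and avoids $321$; in diamond terms, with $v=4$ and $d=3$, the element with permutation $3\,4\,5\,6\,1\,2\,11\,12\,7\,8\,9\,10$ has two descents and avoids $321$, and is excluded only because it contains $132$ (for instance $1,11,7$). The correct argument, which is what the paper's terse ``examination of cases'' amounts to, is a dichotomy using \emph{both} forbidden patterns: if $\pi_i>\pi_{i+1}$ and $\pi_j>\pi_{j+1}$ with $i<j$, then either $\pi_{i+1}>\pi_{j+1}$, in which case $\pi_i\,\pi_{i+1}\,\pi_{j+1}$ is a $321$, or $\pi_{i+1}<\pi_{j+1}$, in which case $\pi_{i+1}\,\pi_j\,\pi_{j+1}$ is a $132$. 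Your argument for the second claim (a descent $a>b$ with $b\neq 1$ gives a $132$ if the $1$ occurs earlier and a $321$ if it occurs later) is sound and is exactly the paper's, but the first half of the lemma needs the either-or case split, not a pure $321$ argument.
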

\begin{proof} By examination of cases, any arrangement of two descents forms either a $132$ or a $321$. If a descent does not involve the $1$, then either the $1$ occurs before, causing a $132$, or the $1$ occurs after, causing a $321$. \end{proof}

\begin{theorem} \label{av132321}
$f^{132,321}_{v,d}(x,y)= \displaystyle \sum\limits_{d=1}^\infty \sum\limits_{\sigma \in \mathcal{D}(132,321)} y^dx^{des(\sigma)} = \frac{1-2y+y^2+vxy^2}{(1-y)^3}.$
\end{theorem}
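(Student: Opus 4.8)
The plan is to combine the two structural lemmas already in hand. By Lemma \ref{lem132}, avoiding $132$ forces the labels on each diamond to be increasing, so $\pi_D$ is a concatenation of $d$ increasing runs $R_1 \cdots R_d$, each of length $v$, arising from a set partition $S_1 \sqcup \cdots \sqcup S_d = \{1,\ldots,vd\}$ with $R_i$ the increasing arrangement of $S_i$. In particular every descent of $\pi_D$ sits at a run boundary. By Lemma \ref{lem132321}, $\pi_D$ has at most one descent, so I would split the count into the no-descent case and the one-descent case and assemble $f^{132,321}_{v,d}(x,y)$ from the per-$d$ contributions $\sum_\sigma x^{des(\sigma)}$.

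The no-descent case is immediate: a fully increasing permutation of $\{1,\ldots,vd\}$ is the identity, contributing the single term $x^0$ for every $d$ (and the empty collection contributes the leading constant $1$ for $y^0$, exactly as in the proof of Theorem \ref{av132213}). For the one-descent case, suppose the unique descent sits at the boundary between $R_t$ and $R_{t+1}$. Since every other boundary is an ascent, the relations $\max S_i < \min S_{i+1}$ for $i \ne t$ chain to give $S_1 < \cdots < S_t$ and $S_{t+1} < \cdots < S_d$ elementwise; hence $\pi_D = AB$, where $A$ is the increasing arrangement of $T := S_1 \cup \cdots \cup S_t$ (of size $tv$) and $B$ that of $U := S_{t+1}\cup\cdots\cup S_d$. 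With two increasing runs the longest decreasing subsequence has length at most two, so $321$ is avoided automatically; and a short case check shows a $132$ pattern in $AB$ exists precisely when some element of $U$ lies strictly between two elements of $T$. Avoiding $132$ therefore forces $T$ to be an integer interval $\{m,m+1,\ldots,m+tv-1\}$, and the presence of the descent ($\max T > \min U$) forces $m \geq 2$, equivalently $1 \in U$ --- matching the ``moreover'' clause of Lemma \ref{lem132321}. Conversely each such interval yields a genuine one-descent avoider, and the data $(t,m)$ is recovered from $\pi_D$ (the descent position gives $t$, the first $tv$ values give $m$), so this is a bijection.

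The count is then routine: for each $t \in \{1,\ldots,d-1\}$ the admissible minima are $m \in \{2,\ldots,v(d-t)+1\}$, giving $v(d-t)$ one-descent avoiders, for a total of $\sum_{t=1}^{d-1} v(d-t) = v\binom{d}{2}$. Thus the per-$d$ descent polynomial is $1 + v\binom{d}{2}x$, and
\[ f^{132,321}_{v,d}(x,y) = 1 + \sum_{d=1}^\infty y^d\Bigl(1 + v\binom{d}{2}x\Bigr) = \frac{1}{1-y} + vx\,\frac{y^2}{(1-y)^3} = \frac{1-2y+y^2+vxy^2}{(1-y)^3}, \]
using $\sum_{d\ge 0} y^d = (1-y)^{-1}$ and $\sum_{d\ge 2}\binom{d}{2}y^d = y^2(1-y)^{-3}$.

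The main obstacle is the one-descent structural analysis: verifying that a single boundary descent collapses the whole arrangement into two increasing runs, and that $132$-avoidance is equivalent to the first run being an integer interval. Once that bijection with pairs $(t,m)$ is pinned down, the remaining generating-function manipulation is mechanical, and setting $x=1$ recovers $\left|\mathcal{D}_{v,d}(132,321)\right| = 1 + v\binom{d}{2}$, consistent with the sequence $1,5,13,25,\ldots$ in Table \ref{seqs} for $v=4$.
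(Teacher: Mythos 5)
Your proof is correct and takes essentially the same approach as the paper: both reduce, via Lemma \ref{lem132321}, to counting the one-descent configurations, identify them as the identity with an initial block of labels relocated (your interval $T=\{m,\ldots,m+tv-1\}$ placed first is exactly the paper's ``portion of the identity permutation deleted from the front and inserted after position $vi$''), arrive at the same count $v\binom{d}{2}=\frac{v}{2}d(d-1)$, and finish with the identical generating-function manipulation. The only difference is one of rigor, not of method: you derive the two-increasing-runs structure and the interval characterization carefully, where the paper simply asserts them.
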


\begin{proof}

By Lemma \ref{lem132321}, we need only enumerate those diamonds with one descent where the descent involves the $1$. Everything after the $1$ increases, as does everything before the $1$. In fact, the permutations associated to diamonds that avoid $132$ and $321$ look like a portion of the identity permutation was deleted from the front and inserted after position $vi$, for $i=1, \ldots, d-1$. When $i=d-1$, there are $v$ possibilities for how many numbers appear consecutively with $1$, including $1$. When $i = d-2$, there are $2v$ possibilities, etc. When $i = 1$, there are $(d-1)v$ possibilities. Thus we have $\frac{v}{2}d(d-1)$ diamonds with one descent, and one with zero descents. Thus,

\begin{center}
\begin{align*}
\sum\nolimits y^dx^{des(\sigma)} &= \sum\limits_{d=0}^\infty y^d[1+\frac{v}{2}d(d-1)x]\\
&= \sum\limits_{d=0}^\infty y^d + \frac{vx}{2}\sum\limits_{d=0}^\infty y^dd(d-1)\\
&= \frac{1}{1-y} + \frac{vx}{2}(y^2)\left(\frac{2}{(1-y)^3}\right)\\
&= \frac{1}{1-y} + \frac{vxy^2}{(1-y)^3}\\
&= \frac{(1-y)^2}{(1-y)^3} + \frac{vxy^2}{(1-y)^3}\\
&= \frac{1-2y+y^2+vxy^2}{(1-y)^2}.
\end{align*}
\end{center}

\end{proof}

\begin{corollary}\label{av213321}
$f^{132,321}_{v,d}(1,y)\Bigr|_{y^d} = \left|\mathcal{D}_{v,d}(132, 321)\right| = \left|\mathcal{D}_{v,d}(213, 321)\right| = 1+v\left(\frac{d(d-1)}{2}\right)$
\end{corollary}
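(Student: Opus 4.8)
The plan is to read the formula straight off the generating function in Theorem~\ref{av132321} by specializing $x=1$ and extracting the coefficient of $y^d$, and then to obtain the remaining equality from the Wilf-equivalence already recorded after Proposition~\ref{Wilf}.

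First I would set $x=1$ in the generating function $f^{132,321}_{v,d}(x,y) = \frac{1-2y+y^2+vxy^2}{(1-y)^3}$ supplied by Theorem~\ref{av132321}. Recognizing that $1-2y+y^2 = (1-y)^2$, this splits cleanly into a geometric piece and a correction term:
\[
f^{132,321}_{v,d}(1,y) = \frac{(1-y)^2 + vy^2}{(1-y)^3} = \frac{1}{1-y} + \frac{vy^2}{(1-y)^3}.
\]

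Next I would extract $[y^d]$ from each summand. The term $\frac{1}{1-y}$ contributes $1$ for every $d$. For the second term I would use the standard expansion $\frac{1}{(1-y)^3} = \sum_{n\geq 0}\binom{n+2}{2}y^n$, so that $\frac{vy^2}{(1-y)^3} = v\sum_{n\geq 0}\binom{n+2}{2}y^{n+2}$, whose coefficient of $y^d$ equals $v\binom{d}{2} = v\frac{d(d-1)}{2}$ for $d\geq 2$ (and $0$ for $d\leq 1$, consistent with the stated formula since $\frac{d(d-1)}{2}$ vanishes there). Adding the two contributions yields $\left|\mathcal{D}_{v,d}(132,321)\right| = 1 + v\frac{d(d-1)}{2}$, as claimed; one can sanity-check against the $v=4$ column of Table~\ref{seqs}, where $1+2d(d-1)$ reproduces $1,5,13,25,41,\dots$.

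Finally, for the middle equality I would invoke Proposition~\ref{Wilf}: computing reverse-complements gives $132^{rc} = 213$ and $321^{rc} = 321$, whence $\mathcal{D}_{v,d}(132,321)^{rc} = \mathcal{D}_{v,d}(213,321)$ and the two classes are equinumerous. This is exactly item (d) in the list of $d$-Wilf-equivalences following Proposition~\ref{Wilf}. The argument is entirely routine; the only steps requiring any care are the bookkeeping in the specialization (using $1-2y+y^2=(1-y)^2$ to peel off the geometric part) and reading the correct binomial coefficient out of $(1-y)^{-3}$, so the main obstacle here is essentially nonexistent.
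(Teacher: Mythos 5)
Your proposal is correct and follows essentially the same route as the paper: the count $1+v\frac{d(d-1)}{2}$ is read off from the generating function of Theorem~\ref{av132321} (the paper in fact already tallies $\frac{v}{2}d(d-1)$ one-descent diamonds plus the identity inside that theorem's proof), and the equality $\left|\mathcal{D}_{v,d}(132,321)\right| = \left|\mathcal{D}_{v,d}(213,321)\right|$ is exactly the paper's one-line appeal to the reverse-complement Wilf-equivalence of Proposition~\ref{Wilf}. Your explicit coefficient extraction is a slightly fuller write-up of what the paper leaves implicit, and you correctly use the denominator $(1-y)^3$ from the theorem statement rather than the $(1-y)^2$ typo in the final line of the paper's displayed computation.
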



\begin{proof}
$\mathcal{D}_{v,d}(213,321)$ and $\mathcal{D}_{v,d}(132, 321)$ are $d$-Wilf Equivalent by Proposition \ref{Wilf}.
\end{proof}

\subsection{Diamonds avoiding $231, 312$}

\begin{theorem} \label{av231312}
$f^{231,312}_{v,d}(x,y)= \displaystyle \sum\limits_{d=1}^\infty \sum\limits_{\sigma \in \mathcal{D}(231,312)} y^dx^{des(\sigma)} = \frac{x+yx(1+x)^{v-2}+1}{(1+x)(1-y(1+x)^{v-2})}.$
\end{theorem}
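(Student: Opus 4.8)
The plan is to mirror the recursive arguments of Theorems \ref{av132213} and \ref{av132312}, but the real content is first pinning down the global shape forced by avoiding $231$ and $312$ simultaneously. First I would prove a structure lemma: a permutation avoids both $231$ and $312$ if and only if it is \emph{layered}, that is, a concatenation of maximal decreasing runs whose value-intervals increase from left to right. The quickest route is to locate the largest entry: to avoid $231$ everything preceding the maximum must be smaller than everything following it, and to avoid $312$ everything following the maximum must be decreasing, after which one inducts on the initial segment. This is the analogue of the Simion--Schmidt recursion already invoked for $\{132,213\}$.

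Next I would intersect the layered shape with the diamond constraints. Since the least label of each diamond is read first and the greatest last, each diamond block, read as a length-$v$ pattern, is layered with its minimum in front and its maximum at the end; the only such patterns are those in which the leading and trailing entries form singleton layers and the interior $v-2$ entries are split into decreasing runs in an arbitrary (layered) way. Consequently every descent of $\pi_D$ occurs either at one of the $v-3$ interior gaps of some diamond or at one of the $d-1$ gaps between consecutive diamonds, a total of $d(v-2)-1$ \emph{free} gaps. I would then argue that each free gap may independently be an ascent or a descent: a descent merely merges two adjacent positions into a common decreasing layer, and any resulting layered shape is realized by a unique poset-compatible relabeling.

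This independence yields the recursion $G_d(x)=(1+x)^{v-2}G_{d-1}(x)$ for the descent polynomial $G_d(x)=\sum_{D\in\mathcal{D}_{v,d}(231,312)}x^{des(D)}$, with base case $G_1(x)=(1+x)^{v-3}$ obtained by layering the $v-2$ interior vertices of a single diamond. Hence $\sum_{d=1}^{\infty}\sum_{\sigma}y^d x^{des(\sigma)}=\sum_{d\ge 1}y^{d}(1+x)^{d(v-2)-1}$ is geometric in $y$; collecting the terms over the common denominator $1-y(1+x)^{v-2}$ produces the rational function in the statement. Proposition \ref{Wilf} contributes nothing extra, since $231^{rc}=312$ makes $\{231,312\}$ closed under reverse-complement.

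The hard part is the structure step, in both directions. On one side I must rule out any $231$ or $312$ pattern straddling two diamonds once each block is internally layered; on the other, and more delicately, I must show that \emph{every} assignment of ascents and descents to the $d(v-2)-1$ free gaps is attained by an actual labeling obeying each diamond's poset, so that the correspondence is a genuine bijection and the $(1+x)$ factors are truly independent. Establishing realizability reduces to checking that consecutive decreasing layers can always be filled with the required consecutive value-intervals while keeping each diamond's minimum first and maximum last, which I expect to settle with an explicit greedy assignment of the labels $1,\ldots,vd$ in increasing order across the prescribed layers.
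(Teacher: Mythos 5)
Your proposal is correct and follows essentially the same route as the paper: your layered-permutation lemma is precisely the Simion--Schmidt characterization the paper cites, your $d(v-3)$ free interior gaps plus $d-1$ between-diamond gaps correspond exactly to the paper's ``$2^{v-3}$ arrangements of middle-level vertices'' and ``swap or not swap'' choices, and both arguments reduce to summing the geometric series $\sum_{d\geq 1} y^{d}(1+x)^{d(v-2)-1}$. The only discrepancies are bookkeeping ones in the paper itself rather than gaps in your argument: the paper's closed form includes the empty ($d=0$) term $1$ that your sum omits, and its displayed numerator carries a sign typo, since the series actually sums to $\frac{x+1-yx(1+x)^{v-2}}{(1+x)\left(1-y(1+x)^{v-2}\right)}$.
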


\begin{proof}
Let $n = vd$ be the largest label on a diamond $D \in \mathcal{D}_{v,d}(231,312)$. Avoiding the pattern $231$ means the $1$ must be at the beginning and avoiding the pattern $312$ implies everything after $n$ must be decreasing which forces $n$ to the end of the permutation.
By a result of Simion and Schmidt on permutations, there are $2^{v-3}$ ways to arrange the middle-level vertices within each of the $d$ diamonds in order to avoid both $231$ and $312$ creating between $0$ and $v-3$ descents \cite{SimSchmidt}. There are also two ways to either swap or not swap the last element of each diamond with the first element of the next. This gives the following generating function.

\begin{center}
\begin{align*}
\sum_{D \in \mathcal{D}_{v,d}(231,312)} y^dx^{des(\sigma(D))} &= 1 + \sum\limits_{d=1}^\infty y^d(1+x)^{(v-2)d-1}\\
&= 1 + \frac{1}{(1+x)}\sum\limits_{d=1}^\infty (y(1+x))^{(v-2)d}\\
&= 1 - \frac{1}{(1+x)} + \frac{1}{(1+x)(1-y(1+x)^{v-2})}\\
&= \frac{x+yx(1+x)^{v-2}+1}{(1+x)(1-y(1+x)^{v-2}).}
\end{align*}
\end{center}
\end{proof}

\begin{corollary}
$f^{231,312}_{v,d}(1,y)\Bigr|_{y^d} = \left|\mathcal{D}_{v,d}(231, 312)\right| = 2^{d(v-2)-1}$
\end{corollary}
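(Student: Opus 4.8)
The plan is to obtain this count directly from Theorem~\ref{av231312} by specializing the bivariate descent generating function at $x=1$ and reading off the coefficient of $y^d$. The cleanest route does not require the final closed form at all: in the proof of Theorem~\ref{av231312} the generating function is first presented as the series
\[
f^{231,312}_{v,d}(x,y) = 1 + \sum_{d=1}^{\infty} y^d (1+x)^{(v-2)d-1},
\]
whose coefficient of $y^d$ (for $d \geq 1$) is precisely $(1+x)^{(v-2)d-1}$. I would begin by recalling why this specialization answers the enumerative question. By the definition of the generating function for descents, the coefficient of $y^d$ in $f^{231,312}_{v,d}(x,y)$ is $\sum_{\sigma \in \mathcal{D}_{v,d}(231,312)} x^{des(\sigma)}$; setting $x=1$ replaces each weight $x^{des(\sigma)}$ by $1$, so that this coefficient becomes $\sum_{\sigma \in \mathcal{D}_{v,d}(231,312)} 1 = \left|\mathcal{D}_{v,d}(231,312)\right|$.

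With that observation, the evaluation is immediate. Substituting $x=1$ into the coefficient $(1+x)^{(v-2)d-1}$ gives
\[
(1+1)^{(v-2)d-1} = 2^{(v-2)d-1} = 2^{d(v-2)-1},
\]
which is the asserted formula. As a cross-check one may instead work from the closed rational form of Theorem~\ref{av231312}: setting $x=1$ reduces it to a rational function in $y$, and expanding this as a geometric series and extracting the $y^d$-coefficient must reproduce the same value $2^{d(v-2)-1}$.

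Since Theorem~\ref{av231312} is already proved, there is essentially no obstacle; the only thing needing care is the bookkeeping. I would double-check that the off-by-one exponent $(v-2)d-1$ survives the substitution $x \to 1$, so that the power is $d(v-2)-1$ rather than $d(v-2)$. If one prefers the closed-form cross-check, the delicate point is to keep track of the sign of the $xy(1+x)^{v-2}$ term and of the factor $2^{-1}=\tfrac{1}{2}$ arising in the geometric expansion, since a slip there would corrupt the constant coefficient (for instance, turning the leading $1$ into a $3$) while leaving the remaining structure intact. Both verifications are routine, and either route delivers $\left|\mathcal{D}_{v,d}(231,312)\right| = 2^{d(v-2)-1}$.
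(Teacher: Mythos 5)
Your proposal is correct and matches the paper's (implicit) justification exactly: the corollary is an immediate specialization of Theorem~\ref{av231312}, obtained by setting $x=1$ in the coefficient $(1+x)^{(v-2)d-1}$ of $y^d$ from the series form derived in that theorem's proof, yielding $2^{d(v-2)-1}$. The paper offers no separate proof precisely because this substitute-and-extract step is the routine procedure announced in the introduction, so your careful bookkeeping of the exponent $(v-2)d-1$ is the whole argument.
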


\subsection{Avoiding $231, 321$}

\begin{lemma}\label{(231-321)lemma}
All labels that appear after $n=vd$ must be consecutive and increasing, and if $a_n \neq n$, then $a_{n} = n-1$.
\end{lemma}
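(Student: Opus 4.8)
The plan is to locate the maximum label $n = vd$ inside the associated permutation $\pi_D = a_1 a_2 \cdots a_{vd}$ and then trap the entries that follow it between the two avoidance conditions. Write $k$ for the position of $n$, so that $a_k = vd$. Since the greatest element of any diamond is read last within its block, every entry occurring after position $k$ lies in a later diamond, and the goal is to show that these trailing entries are forced to be $k, k+1, \ldots, n-1$ in exactly that order.

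First I would extract monotonicity from $321$-avoidance: if there were positions $k < i < j$ with $a_i > a_j$, then $a_k = n > a_i > a_j$ would be an occurrence of $321$, so the entries after position $k$ must strictly increase. Next I would invoke the structural fact already recorded for $\mathcal{D}_{v,d}(231)$ at the start of this subsection, namely that with $n$ in position $k$ every entry before position $k$ is smaller than every entry after it (otherwise a value preceding $n$, together with $n$ and a smaller value following $n$, would form $231$). Combining the two, the $k-1$ entries preceding $n$ all lie strictly below the smallest trailing entry.

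The heart of the argument is then a short counting step. Because the $k-1$ entries before $n$ lie below every trailing entry, they must be precisely the $k-1$ smallest labels $\{1, \ldots, k-1\}$; the trailing entries, being the remaining labels other than $n$, are therefore exactly $\{k, k+1, \ldots, n-1\}$, and by the monotonicity above they appear in increasing order. This establishes that the trailing labels are consecutive and increasing. For the final assertion, if $a_n \neq n$ then $n$ does not occupy the last position, so there is at least one trailing entry; the last entry $a_n$ is the largest element of the consecutive block $\{k, \ldots, n-1\}$, which is $n-1$.

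I expect the only delicate point to be the counting step that identifies the trailing set as an interval: one must argue that the separation property (every before-entry below every after-entry), together with the fact that there are exactly $k-1$ before-entries, forces the before-entries to be the initial segment $\{1, \ldots, k-1\}$ and hence the after-entries to be the complementary interval. The $321$ and $231$ conditions each supply exactly one of the two required ingredients, order and separation, so the care lies in assembling them correctly and in checking the edge case $k = vd$, where there are no trailing entries and $a_n = n$.
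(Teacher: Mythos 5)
Your proof is correct and follows exactly the route the paper intends: the paper states this lemma without proof, relying on the observation recorded at the start of the $231$/$312$ subsection (that label $vd$ in position $k$ forces labels $1,\ldots,k-1$ into positions $1,\ldots,k-1$), and your argument reconstructs precisely that separation-plus-counting step from $231$-avoidance together with the monotonicity step from $321$-avoidance. The edge case $k = vd$ and the final claim $a_n = n-1$ are handled correctly, so nothing is missing.
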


 Let
$\beta_{v,j}^{d}$ be the generating function for descents in $\mathcal{D}_{v,d}(231,321)$. Recall Figure \ref{tikz2} is an example of $d-1$ full diamonds with $v$ vertices followed by an incomplete diamond with $j$ vertices for $j=1,\ldots,v-1.$

\begin{theorem} \label{av231321}
\[{f}_{v,d}^{231,321}(x,1)=\beta_{v,v}^d=\beta_{v,(v-1)}^{d}+x\displaystyle \sum\limits_{i=1}^{d-1}\beta_{v,(v-1)}^{d-i}\]
where \[\beta_{v,j}^{1}=
\begin{cases}
    1,& \text{if } j=1\\
    2^{j-1},& \text{if } j=2,\ldots,v-1\\
    2^{v-2},& \text{if } j=v
\end{cases}\]
is the generating function for descents  for $\mathcal{D}_{v,d}(231,321).$
\end{theorem}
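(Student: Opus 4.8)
The plan is to mirror the proof of Theorem~\ref{av231}, partitioning $\mathcal{D}_{v,j}^{d}(231,321)$ according to the position of the largest label $m=v(d-1)+j$ and reading off the generating function for descents block by block. The decisive new ingredient is Lemma~\ref{(231-321)lemma}: once we also forbid $321$, every label appearing after $m$ must form a single increasing, consecutive run. Since avoiding $231$ already forces everything before $m$ to be smaller than everything after $m$ (exactly as recorded in the single-pattern case), the portion of the permutation following $m$ is now completely determined---there is a unique legal diamond labelling of the remaining positions, contributing a factor of $1$ rather than a nontrivial generating function. This is precisely why the product $\alpha_{v,(v-1)}^{i}\alpha_{v,v}^{d-i}$ appearing in Theorem~\ref{av231} degenerates here to the single factor $\beta_{v,(v-1)}^{i}$ (equivalently $\beta_{v,(v-1)}^{d-i}$ after reindexing $i\mapsto d-i$), giving the stated recursion for $\beta_{v,v}^{d}$.

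To make the recursion usable I would first establish the auxiliary recursions for the partial shapes, just as in Theorem~\ref{av231}. For $2\le j\le v-1$ the label $m$ lies either on an interior vertex of the terminal partial diamond or on the greatest vertex of one of the $d-1$ complete diamonds. Writing $g$ for the number of interior vertices from $m$ to the end of the last diamond, the block before $m$ is a shape of type $\mathcal{D}_{v,j-g}^{d}$ while the block after $m$ is the forced increasing run; collecting the extra descent created by $m$ (present unless $m$ is the final vertex, i.e.\ $g=1$) yields the contribution $\beta_{v,j-1}^{d}+x\sum_{g=2}^{j-1}\beta_{v,j-g}^{d}$, and the complete-diamond placements add $x\sum_{i=1}^{d-1}\beta_{v,(v-1)}^{i}$. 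The case $j=1$ is handled separately: placing $m$ on the lone terminal vertex contributes $\beta_{v,v}^{d-1}$, and placing it atop the $i$-th complete diamond again contributes $x\beta_{v,(v-1)}^{i}$. Specializing to $j=v$, where $m$ must occupy a greatest vertex, then reproduces the displayed recursion for $\beta_{v,v}^{d}$.

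For the base case $d=1$ I would argue that in a single (partial) diamond the least vertex, being both smallest and read first, can never serve as an entry of a $231$ or $321$ pattern, and likewise the greatest vertex when $j=v$; hence only the freely permutable middle-level vertices contribute, and their $\{231,321\}$-avoiding arrangements are counted by the Simion--Schmidt enumeration \cite{SimSchmidt}, which supplies the powers of two in $\beta_{v,j}^{1}$. I expect the main obstacle to be the bookkeeping of descents together with the verification that the three pieces---the block before $m$, the singleton $m$, and the forced increasing tail---cannot jointly create a new $231$ or $321$: because the ``before'' values are all smaller than the ``after'' values and $m$ is the global maximum, the only cross-patterns available are $123$ and $132$, both of which are allowed, so no new forbidden pattern arises. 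Establishing this, and confirming that the descent contributed by $m$ is counted exactly once and is absent precisely when $m$ is terminal, is the crux; the remainder is the same recursive assembly used in Theorem~\ref{av231}.
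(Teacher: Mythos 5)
Your proposal is correct and follows essentially the same route as the paper's (deliberately abbreviated) proof: partition by the position of the largest label exactly as in Theorem \ref{av231}, observe that avoiding $321$ forces everything after that label to be increasing so that each second factor in the recursion collapses to $1$, and replace the Catalan initial conditions by Simion--Schmidt counts. One caveat: executed carefully, your base-case argument yields $2^{j-2}$ for a partial diamond with $j$ vertices (it has only $j-1$ free interior vertices) and $2^{v-3}$ for a full diamond, not the printed $2^{j-1}$ and $2^{v-2}$; the printed exponents contradict the paper's own Table \ref{231321}, where $\beta_{5,5}^{1}=1+3x$ evaluates to $4$ rather than $2^{5-2}=8$ at $x=1$, so your method is sound and in fact exposes a typo in the stated initial conditions rather than a flaw in your argument. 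Note also that for the recursion to produce the generating function for descents, these base cases must be recorded as descent polynomials over the $\{231,321\}$-avoiding arrangements of the interior vertices (e.g.\ $1+3x$ for three interior vertices), as the paper's table does, rather than as bare counts.
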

\begin{proof}

We approach the proof similarly to that of Theorem \ref{av231} and partition our diamonds by the position of the largest element and proceed recursively. Because the proofs are very similar, we omit the details of this proof for brevity. The only differences are that since we are now avoiding $321$, we have no descents after the appearance of the largest label, and we have different initial conditions on one diamond.

\end{proof}

Table \ref{231321} is an example of using this recursive technique to find the generating function for descents in $\mathcal{D}_{5,3}(231,321).$

\begin{table}
\begin{center}
\begin{tabular}{ |p{.65cm}||p{1.7cm}|p{3.8cm}|p{4.2cm}|  }
 \hline
 \multicolumn{4}{|c|}{v=5} \\
 \hline
 d & 1 & 2 & 3 \\
 \hline
 $\beta_{5,1}^{d}$ & $1$    & $1+4x+3x^2$         & $1+13x+41x^2+37x^3+12x^4$ \\
 \hline
 $\beta_{5,2}^{d}$ & $1$    & $1+5x+6x^2$         & $1+15x+54x^2+62x^3+24x^4$ \\
 \hline
 $\beta_{5,3}^{d}$ & $1+x$  & $1+7x+13x^2+3x^3$   & $1+18x+80x^2+128x^3+73x^4+12x^5$ \\
 \hline
 $\beta_{5,4}^{d}$ & $1+3x$ & $1+10x+25x^2+12x^3$ & $1+22x+121x^2+248x^3+184x^4+48x^5$ \\
 \hline\hline
 $\beta_{5,5}^{d}$ & $1+3x$ & $1+11x+28x^2+12x^3$ & $1+24x+134x^2+273x^3+196x^4+48x^5$ \\
 \hline
\end{tabular}
\end{center}\caption{The recursive steps necessary to find the generating function for descents in $\mathcal{D}_{5,3}(231,321).$}\label{231321}
\end{table}

\begin{corollary}\label{av312321}
${f}_{v,d}^{231,321}(1,y)\Bigr|_{y^d}=\beta_{v,v}^d(1) = |\mathcal{D}_{v,d}(231,321)| = |\mathcal{D}_{v,d}(312,321)|.$
\end{corollary}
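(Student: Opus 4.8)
The plan is to split the corollary into two essentially independent pieces: the chain of equalities relating ${f}_{v,d}^{231,321}$, $\beta_{v,v}^d$, and the cardinality, which is purely a matter of unwinding definitions, and the Wilf-equivalence $|\mathcal{D}_{v,d}(231,321)| = |\mathcal{D}_{v,d}(312,321)|$, which is a direct instance of the reverse-complement symmetry already recorded in Proposition \ref{Wilf}.

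First I would unwind the generating-function convention. By definition $f^{231,321}_{v,d}(x,y) = \sum_{D \in \mathcal{D}_{v,d}(231,321)} x^{des(\pi_D)} y^d$, so substituting $x=1$ collapses each summand to $y^d$, giving $f^{231,321}_{v,d}(1,y) = y^d\,|\mathcal{D}_{v,d}(231,321)|$; extracting the coefficient of $y^d$ therefore returns exactly $|\mathcal{D}_{v,d}(231,321)|$. On the other hand, Theorem \ref{av231321} identifies $\beta_{v,v}^d$ with $f^{231,321}_{v,d}(x,1) = \sum_{D} x^{des(\pi_D)}$, and setting $x=1$ yields $\beta_{v,v}^d(1) = \sum_D 1 = |\mathcal{D}_{v,d}(231,321)|$. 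This settles the entire first chain of equalities with no genuine computation.

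Next I would dispatch the Wilf-equivalence through the reverse-complement. Computing directly, the reverse of $231$ is $132$, whose complement is $312$, so $231^{rc} = 312$; likewise the reverse of $321$ is $123$, whose complement is $321$, so $321^{rc} = 321$. Hence $\{231,321\}^{rc} = \{312,321\}$, and the multi-pattern form of Proposition \ref{Wilf} gives $\mathcal{D}_{v,d}(231,321)^{rc} = \mathcal{D}_{v,d}(312,321)$. Since reverse-complement is an involution on the legal labelled posets (again by Proposition \ref{Wilf}), it is a bijection between the two avoidance classes, so they are equinumerous; this is precisely item (e) listed immediately after Proposition \ref{Wilf}.

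There is essentially no obstacle here: the corollary is a bookkeeping consequence of the descent generating-function convention together with the symmetry proposition. The only point meriting a sentence of care is verifying that reverse-complement carries the \emph{pair} $\{231,321\}$ onto the pair $\{312,321\}$ rather than onto some other two-element set, which the explicit pattern computation above confirms; everything else follows immediately from definitions and results established earlier in the paper.
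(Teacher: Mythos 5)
Your proposal is correct and follows essentially the same route as the paper: the paper's proof is a one-line invocation of Proposition \ref{Wilf} to get $231,321 \sim_W 312,321$ (item (e) after that proposition), with the chain of generating-function equalities treated as definitional bookkeeping exactly as you do. Your version merely spells out the reverse-complement computation $231^{rc}=312$, $321^{rc}=321$ and the coefficient extraction explicitly, which the paper leaves implicit.
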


\begin{proof}
By Proposition \ref{Wilf}, $231,321$ is $d$-Wilf-equivalent to $312,321.$
\end{proof}

\begin{table}
\begin{center}
\begin{tabular}{ |p{1cm}||p{2cm}|p{2cm}|p{2cm}|  }
 \hline
 \multicolumn{4}{|c|}{v=5} \\
 \hline
 d & 1 & 2 & 3 \\
 \hline
 $\beta_{5,1}^{d}$ & $1$ & $8$  & $104$ \\
 \hline
 $\beta_{5,2}^{d}$ & $1$ & $12$ & $156$ \\
 \hline
 $\beta_{5,3}^{d}$ & $2$ & $24$ & $312$ \\
 \hline
 $\beta_{5,4}^{d}$ & $4$ & $48$ & $624$ \\
 \hline\hline
 $\beta_{5,5}^{d}$ & $4$ & $52$ & $676$ \\
 \hline
\end{tabular}
\end{center}\caption{The total number of permutations for $\mathcal{D}_{5,d}$ that avoid the patterns $(231,321)$ when $d = 1, 2, 3.$}
\end{table}



\section{Diamonds avoiding three or four patterns of length 3}\label{Sset}

There are only two nontrivial cases to examine when we avoid three patterns of length 3: $132,213,321$ and $231,312,321$.

\subsection{Diamonds avoiding $132,213,321$}

\begin{theorem} \label{av132213321}
$f^{132,213,321}_{v,d}(x,y)= \displaystyle \sum\limits_{d=1}^\infty \sum\limits_{\sigma \in \mathcal{D}(132,213,321)} y^dx^{des(\sigma)} = \frac{1-y+xy^2}{(1-y)^2}.$
\end{theorem}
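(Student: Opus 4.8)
The plan is to combine the three avoidance conditions into a clean structural description of the diamonds, then translate that into a descent-counting generating function. The patterns $132,213,321$ together are very restrictive. By Lemma \ref{lem132213}, avoiding $132$ and $213$ already forces the labels on each diamond to be consecutive and increasing, so the permutation is determined entirely by the order in which the $d$ blocks $\{1,\ldots,v\}, \{v+1,\ldots,2v\},\ldots$ are arranged. Adding the condition that we also avoid $321$ then constrains the allowable orderings of these blocks. First I would argue that, viewing each consecutive block as a single ``super-letter,'' a $321$ among the blocks corresponds exactly to a decreasing subsequence of length three in the block-ordering; hence the block-ordering permutation itself must avoid $321$, $132$, and $213$. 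The set of permutations of length $d$ avoiding $\{132,213,321\}$ is counted by $\mathbb{N}$ (sequence A000027), matching the enumeration $\left|\mathcal{D}_{v,d}(132,213,321)\right| = d$ claimed in Table \ref{seqs}.

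Next I would pin down the explicit shape of these block-orderings. A permutation avoiding $\{132,213,321\}$ has a well-known rigid form: these are precisely the permutations that are increasing except possibly for a single adjacent pair, equivalently the identity with at most one ``descent'' obtained by moving a single prefix or placing exactly one inversion. Concretely I expect the $d$ admissible block-orderings to be: the identity ordering (zero descents), together with $d-1$ orderings each having exactly one descent, indexed by the position of the single descent. This matches Lemma \ref{lem132321}-style reasoning: at most one descent is possible, and the descent is constrained. I would verify this by a short case analysis, exactly parallel to the proof of Lemma \ref{lem132321}, checking that two descents among the blocks would force a $321$ (or a $132$/$213$ between blocks), and that a single descent in the block-ordering contributes exactly one descent to $\pi_D$.

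With the structure established, the generating function is immediate. For each $d$ there is exactly one arrangement with $0$ descents and exactly $d-1$ arrangements with $1$ descent, so
\begin{align*}
\sum\nolimits y^d x^{des(\sigma)} &= \sum\limits_{d=1}^{\infty} y^d\bigl(1 + (d-1)x\bigr)\\
&= \sum\limits_{d=1}^{\infty} y^d + x\sum\limits_{d=1}^{\infty}(d-1)y^d\\
&= \frac{y}{1-y} + x\cdot\frac{y^2}{(1-y)^2}\\
&= \frac{y(1-y) + xy^2}{(1-y)^2}.
\end{align*}
Including the empty term for the start of the recursion and adjusting the index to match the stated convention yields $\frac{1-y+xy^2}{(1-y)^2}$, which is the claimed formula; setting $x=1$ recovers $\left|\mathcal{D}_{v,d}(132,213,321)\right| = d$.

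The main obstacle I anticipate is the reduction step: carefully justifying that a forbidden pattern among the consecutive blocks is equivalent to the same forbidden pattern in the block-ordering permutation, and conversely that no forbidden pattern can arise \emph{within} the refinement of a block once the blocks are increasing and consecutive. Since each block is itself an increasing run of $v$ consecutive labels, any pattern of length three using two elements from one block and one from another reduces to a pattern already controlled by the block-ordering, but this needs to be checked for all three forbidden patterns simultaneously rather than invoked informally. Once that equivalence is nailed down, counting the block-orderings and assembling the generating function is routine.
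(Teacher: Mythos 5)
Your proposal is correct, but it takes a genuinely different route from the paper's proof. The paper never invokes the block structure at all: it pivots on the position of the maximum label $n=vd$ and reads off the global shape of $\pi_D$ directly from the three conditions (avoiding $132$ forces every label before $n$ to exceed every label after it, avoiding $213$ forces the prefix of $n$ to increase, avoiding $321$ forces the suffix to increase), then observes that $n$ must occupy a position $vs$, so there are exactly $d$ admissible labellings with at most one descent each, occurring between diamonds. You instead reuse Lemma \ref{lem132213} to collapse each diamond into a block of $v$ consecutive increasing labels, and reduce the problem to counting length-$d$ permutations avoiding $\{132,213,321\}$, which is a Simion--Schmidt \cite{SimSchmidt} classification; the avoiders are the $d$ prefix rotations $(k+1)(k+2)\cdots d\,1\,2\cdots k$. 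Your route buys modularity (it reuses an existing lemma) and makes the appearance of A000027 transparent, while the paper's argument is shorter and self-contained. Two points in yours need tightening. First, the transfer lemma you flag as the main obstacle is genuinely needed, but it is easy once stated correctly: because each block is an interval of values occupying contiguous positions, any occurrence of a length-3 pattern that uses two letters from a single block is order-isomorphic to $123$, $231$, or $312$, none of which is forbidden; hence every occurrence of a forbidden pattern in $\pi_D$ uses letters from three distinct blocks and corresponds exactly to an occurrence of that pattern in the block-ordering, and conversely. Second, your description of the avoiders as ``increasing except possibly for a single adjacent pair'' is inaccurate --- $231$ avoids the triple but is not an adjacent transposition of $123$; the avoiders are the rotations (the identity with a prefix moved to the end). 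They do, however, have at most one descent, with distinct descent positions, so your count of one ordering with zero descents and $d-1$ orderings with exactly one descent, and the ensuing generating function computation (including the constant term for $d=0$, handled exactly as in the paper), are correct.
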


\begin{proof}
Let $n=vd$ be the largest label in $d$ diamonds with $v$ vertices. Avoiding the pattern $132$ forces all labels before $n$ to be larger than all labels after. Avoiding the pattern $213$ forces all labels before $n$ to be increasing. Avoiding the pattern $321$ forces all labels after $n$ to be increasing.
This indicates that all vertices that appear before $n$ will be the consecutive numbers prior to $n$ and all vertices after $n$ will be the remaining elements ordered consecutively. A label $a_i = n$ iff $i = vs$ for some $s = 1, \ldots, d$, and there is only one arrangement for the rest of the elements. Therefore, there can only be, at most, one descent and it occurs between diamonds. So,
\begin{center}
\begin{align*}
\sum\nolimits y^dx^{des(\sigma)} &= 1 + \sum\limits_{d=1}^\infty y^d(1+(d-1)x)\\
&= \frac{1}{1-y} + xy^2\sum\limits_{d=1}^\infty (d-1)y^{d-2}\\
&= \frac{1-y+xy^2}{(1-y)^2.}
\end{align*}
\end{center}
\end{proof}

\begin{corollary}
$f^{132,213,321}_{v,d}(1,y)\rvert_{y^d} = \left|\mathcal{D}_{v,d}(132,213, 321)\right| = d$.
\end{corollary}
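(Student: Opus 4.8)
The plan is to derive the corollary directly from Theorem~\ref{av132213321} by specializing its bivariate generating function at $x=1$ and then extracting the coefficient of $y^d$. The first equality, $f^{132,213,321}_{v,d}(1,y)\rvert_{y^d} = \left|\mathcal{D}_{v,d}(132,213,321)\right|$, is immediate from the definition of the generating function for descents. Since $f^{P}_{v,d}(x,y)=\sum_{D\in\mathcal{D}_{v,d}(P)}x^{des}y^d$, setting $x=1$ replaces each weight $x^{des(\sigma)}$ by $1$, so the coefficient of $y^d$ simply counts the diamonds in $\mathcal{D}_{v,d}(132,213,321)$ — exactly the reduction $x=1$, coefficient of $y^d$ recorded in the introduction. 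No separate argument is needed for this equality.

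It then remains to evaluate the coefficient. Substituting $x=1$ into the closed form of Theorem~\ref{av132213321} yields $\dfrac{1-y+y^2}{(1-y)^2}$. I would expand $\dfrac{1}{(1-y)^2}=\sum_{k\geq 0}(k+1)y^k$ and multiply by the numerator $1-y+y^2$, reading off the coefficient of $y^d$ as a sum of three contributions: $(d+1)$ from the term $1$, then $-d$ from $-y$ (an index shift by one), and $(d-1)$ from $y^2$ (an index shift by two). Collecting these gives $(d+1)-d+(d-1)=d$, which is the claimed value and matches sequence A000027 in Table~\ref{seqs}.

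Because this is a routine partial-fraction extraction from an already-established generating function, I do not anticipate any genuine obstacle; all the combinatorial content lives in the parent theorem. The only point meriting a moment's care is the boundary case $d=1$, where the $y^2$ term contributes nothing, so the count is $(1+1)-1=1$, still equal to $d$. Verifying that the index shifts for the $-y$ and $y^2$ terms behave correctly for small $d$ confirms the formal power series identity holds for all $d\geq 1$.
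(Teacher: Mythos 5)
Your proposal is correct and matches the paper's (implicit) argument: the corollary is stated there without proof precisely because it is the routine specialization $x=1$ followed by coefficient extraction from Theorem~\ref{av132213321}, which you carry out accurately, $(d+1)-d+(d-1)=d$, including the $d=1$ boundary check. Nothing further is needed.
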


\subsection{Diamonds avoiding 231, 312, 321}

We will proceed by examining what changes can be made to the identity permutation while still avoiding $231, 312$, and $321$.

\begin{lemma}\label{231-312-321-1}For labels $a_{i}, a_{j}, a_{k}$ if $a_{i},a_{j}<a_{k},$ then $i<k$ or $j<k$ in order to avoid the patterns $312$ and $321.$
\end{lemma}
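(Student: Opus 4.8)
The plan is to argue by contraposition. Suppose the conclusion fails, so that neither $i<k$ nor $j<k$ holds; since the three positions are distinct this means $k<i$ and $k<j$, i.e.\ both $a_i$ and $a_j$ occur to the right of $a_k$ in the permutation. Together with the hypothesis $a_i,a_j<a_k$, I want to show that this configuration forces an occurrence of $312$ or $321$, contradicting the assumed avoidance of both patterns.

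The key observation is that $a_k$ is the largest of the three values $a_i,a_j,a_k$ by hypothesis and, under the negated conclusion, it is also the first of the three to be read. Hence the subsequence obtained by reading these three entries in position order begins with its maximum, so it reduces to a pattern of length three whose first entry is its largest; the only such patterns are $312$ and $321$.

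To finish I would split on the relative order of $i$ and $j$. If $i<j$ the reading order is $a_k,a_i,a_j$, and the trailing pair reduces to $12$ when $a_i<a_j$ (giving a $312$) and to $21$ when $a_i>a_j$ (giving a $321$); if instead $j<i$, the symmetric argument with reading order $a_k,a_j,a_i$ again yields one of $312$ or $321$. In every subcase a forbidden pattern appears, which is the desired contradiction.

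I do not anticipate a genuine obstacle here: once one notices that the largest of the three entries is read first, the remainder is a short finite case check. The only point deserving care is verifying that the two forbidden patterns $312$ and $321$ between them exhaust every placement of the two smaller entries behind the larger one, which is precisely why both patterns, rather than just one, must be hypothesized.
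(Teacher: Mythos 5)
Your proof is correct: negating the conclusion puts both smaller entries after $a_k$, so the three entries read in position order begin with their maximum and must reduce to $312$ or $321$, contradicting avoidance. The paper states this lemma with no proof at all (treating it as self-evident), and your contrapositive case check is exactly the short justification the authors evidently had in mind, so there is nothing further to reconcile.
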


A \emph{swap} is when two consecutive labels from the identity permutation switch positions in the permutation. Since any permutation can be created from the identity using swaps, restricting our changes to swaps will not exclude any possibilities.
\begin{lemma}\label{231-312-321-2}
All swaps must be disjoint in order to avoid $321.$
\end{lemma}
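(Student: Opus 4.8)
The plan is to prove Lemma \ref{231-312-321-2} by contradiction: I would suppose two swaps overlap and exhibit a forbidden $321$ pattern directly. First I would make precise what ``disjoint'' must mean. A swap interchanges the positions of two values $c$ and $c+1$ that are consecutive in the identity; so a swap can be recorded by the pair of \emph{values} $\{c,c+1\}$ it transposes. Two swaps fail to be disjoint precisely when they share a value, i.e.\ when one swap moves $\{c,c+1\}$ and another moves $\{c+1,c+2\}$, so that the value $c+1$ is asked to move in two directions at once. (I would note that since every permutation is generated by such swaps, by Lemma \ref{231-312-321-1} the changes we must track really are these value-adjacent transpositions, so this is the only way overlap can occur.)

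Next I would carry out the case analysis on what ``applying two overlapping swaps'' produces among the three consecutive values $c, c+1, c+2$. Starting from their identity order $c, (c+1), (c+2)$ and performing the swap on $\{c,c+1\}$ together with the swap on $\{c+1,c+2\}$, the only way to honor both is to send the middle value $c+1$ to the front while $c$ and $c+2$ fall behind, or symmetrically to send $c+1$ to the back; in either resolution the three values end up arranged, relative to one another, as the decreasing pattern. Concretely, the three positions occupied by $c, c+1, c+2$ will read (in position order) either $c+2,\,c+1,\,c$ or an arrangement order-isomorphic to $321$. I would then observe that these three labels, read left to right in the permutation, reduce to $321$, which is exactly the forbidden pattern. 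Hence no collection of swaps that avoids $321$ can contain two overlapping swaps, and all swaps must be disjoint.

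The main obstacle is bookkeeping rather than depth: I must argue that overlapping \emph{value-adjacent} swaps are the only relevant kind of overlap (swaps on values that are far apart simply commute and cannot interfere), and I must make sure the short case check genuinely forces a decreasing triple in \emph{every} resolution of the conflicting instructions, not merely in one convenient case. To keep this clean I would phrase it as: if two swaps are not disjoint then there exist three consecutive values $c, c+1, c+2$ two of whose defining adjacencies are both swapped, and then verify that however the resulting relative order of $c, c+1, c+2$ is realized it contains a descent from a larger value to a smaller one on both sides of the middle value, yielding $321$. This reduces the whole lemma to inspecting the handful of ways three consecutive values can be permuted by two conflicting adjacent transpositions, so no heavy computation is needed.
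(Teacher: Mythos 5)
There is a genuine error in your central case check. With the paper's notion of swap (exchanging the consecutive values $c$ and $c+1$), performing the two overlapping swaps $\{c,c+1\}$ and $\{c+1,c+2\}$ --- in whichever order you compose them --- leaves the three values in the relative order $c+2,\,c,\,c+1$ (pattern $312$) or $c+1,\,c+2,\,c$ (pattern $231$), never in the decreasing order $c+2,\,c+1,\,c$; producing $321$ on three elements requires \emph{three} adjacent transpositions, not two. So your key claim that ``in either resolution the three values end up arranged as the decreasing pattern'' is false, and your own wording is internally inconsistent: if $c+1$ is ``sent to the front'' of the triple, that triple cannot read $c+2, c+1, c$. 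Worse, the statement you are trying to prove cannot be established in the literal form you attack it: the permutation $312$ avoids $321$, yet as a $3$-cycle it is not an involution and hence cannot be written as a product of disjoint transpositions at all, so ``overlapping swaps force a $321$'' admits a direct counterexample. The lemma is only true (and is only used) in the ambient context of Section \ref{Sset}, where $231$ and $312$ are forbidden as well.

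The paper's own proof, though terse, is an honest case analysis of the ways two swaps can overlap (on $3$ elements, or interleaved on $4$), and in context each overlapping configuration is excluded because it creates \emph{one of} $231$, $312$, or $321$ --- not $321$ alone; this is exactly how the lemma is invoked, jointly with Lemma \ref{231-312-321-1}, in the proof of Theorem \ref{av231312321}. To repair your argument, either (a) carry out your case check and conclude that an overlap yields $231$ or $312$, then appeal to the section's full pattern set $\{231,312,321\}$ to rule it out; or (b) adopt ``inversion'' semantics, in which realizing a set of swaps means each swapped pair ends up out of order in the final permutation, whereupon transitivity ($c+2$ before $c+1$ before $c$) does yield a genuine $321$ --- but then you must reconcile that semantics with the paper's assertion that every permutation is generated by swaps (under inversion semantics $312$ is generated by no set of swaps), which your proposal does not address.
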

\begin{proof}
We simply examine the cases when two swaps overlap in some way, either with two swaps executed on $3$ elements, or two overlapping swaps on $4$ elements.
\end{proof}

\begin{theorem} \label{av231312321}
The generating function for descents in $\mathcal{D}_{v,d}(231,312,321)$ is \[f_{v,d}^{231,312,321}(x) = (1+x)^{d-1}d\displaystyle\sum\limits_{k=0}^{\lfloor\frac{v-2}{2}\rfloor}{v-2-k \choose k}x^k.\]
\end{theorem}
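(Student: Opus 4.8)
The plan is to realize every element of $\mathcal{D}_{v,d}(231,312,321)$ as the identity permutation $1\,2\,\cdots\,(vd)$ modified by a set of pairwise disjoint swaps, and then to read the descent generating function off the independent choices that are available. First I would pin down the characterization: by Lemma \ref{231-312-321-1}, avoiding $312$ and $321$ forces each label to have at most one smaller label to its right, and by Lemma \ref{231-312-321-2} any realization by swaps must use pairwise disjoint swaps. Combined with the observation that every permutation is reachable from the identity by swaps of consecutive labels, this shows each avoider is the identity together with some collection of disjoint swaps; avoidance of $231$ then rules out the remaining configurations that are not isolated single swaps, so that the permitted swaps may be chosen freely subject only to disjointness and to legality of the resulting labelling.

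Next I would determine which swaps are \emph{legal}, i.e.\ keep the labelling compatible with the diamonds. Examining where a consecutive pair $(i,i+1)$ sits in the identity, a swap is legal in exactly two situations: (a) $i$ and $i+1$ are two of the $v-2$ middle vertices of one diamond, or (b) $i$ is the greatest element of diamond $s$ and $i+1$ is the least element of diamond $s+1$. Swapping the least vertex with a middle vertex, or a middle vertex with the greatest vertex, violates the poset relations and is forbidden. Since each swap exchanges consecutive values in adjacent reading positions, it creates exactly one descent and so is weighted by $x$.

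Then I would argue that the choices decouple across diamonds and gaps. Type-(b) swaps involve only the tops and bottoms of diamonds, so they are automatically disjoint from every middle swap and from one another; hence each of the $d-1$ gaps between consecutive diamonds contributes an independent factor $(1+x)$. Within a single diamond the legal middle swaps correspond to the edges of the path on its $v-2$ middle vertices, and a disjoint set of them is precisely a matching of that path; since the number of matchings using $k$ edges is $\binom{v-2-k}{k}$, each of the $d$ diamonds contributes the independent factor $\sum_{k=0}^{\lfloor (v-2)/2\rfloor}\binom{v-2-k}{k}x^k$, with $x$ marking descents. Taking the product of one such factor per diamond and one factor $(1+x)$ per gap yields the claimed generating function for descents.

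The step I expect to be the main obstacle is the verification that this decoupling is exact: that the independent swap choices neither omit any avoider nor create a forbidden pattern across a diamond boundary. Concretely, I would need to confirm that a type-(b) swap at a gap, possibly flanked by middle swaps in the adjacent diamonds, never produces a $231$, $312$, or $321$—the isolated length-one descents can only assemble into the allowed pattern $132$—and, conversely, that every legal diamond labelling avoiding all three patterns is genuinely a disjoint union of type-(a) and type-(b) swaps. Reducing the global avoidance condition to these purely local, mutually independent swap choices is the crux; once it is established, the generating function follows immediately from the product rule.
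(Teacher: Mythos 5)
Your proposal is correct and takes essentially the same approach as the paper's proof: both realize each avoider as the identity permutation modified by pairwise disjoint adjacent swaps, legal only between two interior vertices of one diamond or between the greatest element of a diamond and the least element of the next, so that each of the $d-1$ gaps contributes a factor $(1+x)$ and each diamond contributes $\sum_{k=0}^{\lfloor (v-2)/2\rfloor}\binom{v-2-k}{k}x^k$ (your path-matching count is exactly the paper's count of nonconsecutive swap positions). One remark: your final expression $(1+x)^{d-1}\bigl(\sum_{k}\binom{v-2-k}{k}x^k\bigr)^{d}$ agrees with the paper's own proof and with its corollary $|\mathcal{D}_{v,d}(231,312,321)|=2\cdot 4^{d-1}$, whereas the displayed theorem statement has a typo (the $d$ appears as a multiplier rather than as the exponent on the sum), so you have proved the intended statement.
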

\begin{proof}
Every final element of a diamond can either remain unchanged or be swapped with the least element of the next diamond. This then gives the generating function $(1+x)^{d-1}$ for each possible swap. Let $k$ represent the nonconsecutive positions from which to choose a swap among the interior vertices. Note that in a diamond there are $v-3$ positions to swap since there are $v-2$ interior vertices.
  By Lemma \ref{231-312-321-1} and Lemma \ref{231-312-321-2} any consecutive interior vertices can only be swapped disjointly. Since the swaps must be nonconsecutive, $k$ must be chosen from $v-3-(k-1).$ This gives ${v-2-k \choose k}.$ We then sum over all $k$ in order to generate all possible descents for a single diamond. Since we have $d$ diamonds in which to execute these swaps, we raise to the $d^{th}$ power. The gfd for $\mathcal{D}_{v,d}(231,312,321)$ is then $(1+x)^{d-1}\big(\Sigma_{k=0}^{\lfloor\frac{v-2}{2}\rfloor}{v-2-k \choose k}x^k\big)^d.$
\end{proof}
\begin{corollary}
${f}_{v,d}^{231,312,321}(1,y)\Bigr|_{y^d}=|\mathcal{D}_{v,d}(231,312,321)| = 2\cdot4^{d-1}$.
\end{corollary}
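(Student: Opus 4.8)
The plan is to read the cardinality $|\mathcal{D}_{v,d}(231,312,321)|$ off the descent generating function via the recipe of the introduction: substitute $x=1$ and extract the coefficient of $y^{d}$, which here is just the value at $x=1$ of the descent polynomial of Theorem \ref{av231312321}. Concretely I would start from the fully factored form obtained at the close of that proof,
\[
f_{v,d}^{231,312,321}(x)=(1+x)^{d-1}\left(\sum_{k=0}^{\lfloor (v-2)/2\rfloor}\binom{v-2-k}{k}\,x^{k}\right)^{d},
\]
where the factor $(1+x)^{d-1}$ records the $d-1$ optional boundary swaps between consecutive diamonds and the $d$th power records the mutually independent interior-swap patterns inside the $d$ diamonds.

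Next I would set $x=1$. The boundary factor collapses to $2^{d-1}$, and the interior factor becomes $S_{v}^{\,d}$, where $S_{v}:=\sum_{k=0}^{\lfloor (v-2)/2\rfloor}\binom{v-2-k}{k}$ is the shallow-diagonal sum of Pascal's triangle. A one-line induction using Pascal's rule shows $S_{v}=S_{v-1}+S_{v-2}$, so $S_{v}=F_{v-1}$ (Fibonacci, $F_{1}=F_{2}=1$) and in general $f_{v,d}^{231,312,321}(1)=2^{d-1}F_{v-1}^{\,d}$. Specializing to the tabulated case $v=4$ (Table \ref{seqs}) gives $S_{4}=\binom{2}{0}+\binom{1}{1}=2$, so the count is $2^{d-1}\cdot 2^{d}=2^{2d-1}=2\cdot 4^{d-1}$, matching the stated value and OEIS A081294.

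The single point requiring care — and the step I would flag as the real obstacle — is that the interior contribution must be raised to the $d$th power rather than multiplied by $d$; the two readings already disagree at $d=3$ (giving $32$ versus $24$), so the substitution must be fed by the factored generating function above rather than the collapsed expression appearing in the displayed statement of Theorem \ref{av231312321}. Because the interior swap choices are genuinely independent across the $d$ diamonds they multiply, so the $d$th power is correct; after this is pinned down, only the routine Fibonacci evaluation remains.
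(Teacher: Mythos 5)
Your proposal is correct and follows the same (and essentially only) route the paper intends: substitute $x=1$ into the descent generating function of Theorem \ref{av231312321} and read off the count. You also rightly flag two points the paper glosses over: the displayed statement of that theorem contains a typo --- the factor $d$ should be an exponent, as in the final line of its proof, $(1+x)^{d-1}\bigl(\sum_{k}\binom{v-2-k}{k}x^k\bigr)^{d}$, and only the exponent reading reproduces the tabulated values --- and the stated value $2\cdot 4^{d-1}$ is the $v=4$ specialization of the general count $2^{d-1}F_{v-1}^{\,d}$, which your Fibonacci evaluation of the shallow-diagonal sum makes explicit.
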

\begin{corollary}
$\{|\mathcal{D}_{v,1}(231,312,321)|\}_{v\geq 1}$ is the Fibonacci numbers.
\end{corollary}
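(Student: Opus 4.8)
The plan is to read off the total count directly from the descent generating function of Theorem~\ref{av231312321} by specializing the two free parameters. First I would set $d=1$: the prefactor $(1+x)^{d-1}$ collapses to $1$, and the $d$-fold contribution over the single diamond (whether read as the factor $d$ in the displayed statement or as the $d$-th power appearing in its proof, which coincide when $d=1$) leaves only
\[
f_{v,1}^{231,312,321}(x)=\sum_{k=0}^{\lfloor (v-2)/2\rfloor}\binom{v-2-k}{k}x^k .
\]
To pass from this refined, descent-tracking statement to a raw count I would then set $x=1$, since evaluating a descent generating function at $x=1$ sums over all avoiders regardless of their number of descents. This yields
\[
\left|\mathcal{D}_{v,1}(231,312,321)\right|=\sum_{k=0}^{\lfloor (v-2)/2\rfloor}\binom{v-2-k}{k}.
\]

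The crux is then to recognize the right-hand side as a Fibonacci number. Writing $S_n=\sum_{k\ge 0}\binom{n-k}{k}$ (a finite sum, since $\binom{n-k}{k}=0$ once $k>n-k$), I would prove the classical identity $S_n=F_{n+1}$ in the convention $F_1=F_2=1$. The quickest route is to establish the Fibonacci recurrence via Pascal's rule: applying $\binom{n-k}{k}=\binom{n-1-k}{k}+\binom{n-1-k}{k-1}$ termwise splits $S_n$ into a copy of $S_{n-1}$ and, after reindexing $k\mapsto k+1$, a copy of $S_{n-2}$, so that $S_n=S_{n-1}+S_{n-2}$; together with $S_0=S_1=1$ this forces $S_n=F_{n+1}$. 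Setting $n=v-2$ then gives $\left|\mathcal{D}_{v,1}(231,312,321)\right|=F_{v-1}$, i.e. the Fibonacci numbers. A more self-contained alternative, which I find illuminating, is combinatorial: by Lemmas~\ref{231-312-321-1} and~\ref{231-312-321-2} a single avoiding diamond is determined by a choice of pairwise nonconsecutive swaps among the $v-3$ adjacent interior positions, equivalently an independent set in the path $P_{v-3}$. Since the number of independent sets in $P_m$ is $F_{m+2}$, taking $m=v-3$ again produces $F_{v-1}$, and here the summand $\binom{v-2-k}{k}$ is exactly the number of such independent sets of size $k$, matching the generating function above term by term.

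I expect essentially no deep obstacle; the only genuine content is the binomial-to-Fibonacci identity, which is standard. The one point requiring care is indexing together with the small-$v$ boundary: Theorem~\ref{av231312321} is proved under the standing hypothesis $v\ge 4$, so to justify the claim for all $v\ge 1$ I would verify the cases $v=1,2,3$ directly (each such diamond is a chain or near-chain admitting a unique valid labelling, contributing the leading entries of the sequence) and confirm that they agree with the chosen Fibonacci indexing. Once the identity and these base cases are in place, the corollary follows immediately.
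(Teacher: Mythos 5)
Your proof is correct, but it takes a genuinely different route from the paper's. The paper never invokes Theorem \ref{av231312321} here: it proves the corollary by a direct structural recursion, conditioning on the final interior vertex of a diamond with $v+1$ vertices (it carries label $v$ if the last two interior vertices are not swapped, or $v-1$ if they are), which gives $|\mathcal{D}_{v+1,1}(231,312,321)|=|\mathcal{D}_{v,1}(231,312,321)|+|\mathcal{D}_{v-1,1}(231,312,321)|$ with base cases $1$ and $2$ at $v=3,4$ --- i.e., it exhibits the Fibonacci recurrence on the poset itself. You instead specialize the generating function of Theorem \ref{av231312321} at $d=1$, $x=1$ and reduce everything to the classical identity $\sum_{k}\binom{n-k}{k}=F_{n+1}$, proved by Pascal's rule; your alternative bijection to independent sets in the path $P_{v-3}$ is essentially a repackaging of the swap analysis of Lemmas \ref{231-312-321-1} and \ref{231-312-321-2} that also underlies the paper's recursion. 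What your route buys: it is a pure corollary of the theorem, it pins down the indexing explicitly ($|\mathcal{D}_{v,1}(231,312,321)|=F_{v-1}$, which the paper leaves vague), and you correctly flag both the discrepancy between the theorem's displayed factor of $d$ and the $d$-th power derived in its proof (harmless at $d=1$) and the $v\le 3$ boundary, where the standing hypothesis $v\ge 4$ fails and the sequence must be checked by hand. What the paper's route buys: it is self-contained, independent of the correctness of the theorem's formula, and it explains structurally \emph{why} a Fibonacci recurrence appears rather than deriving it from a binomial identity.
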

\begin{proof}
The base cases are $|\mathcal{D}_{3,1}(231,312,321)|=1$ which is the identity, and $|\mathcal{D}_{4,1}(231,312,321)|=2,$ which is the identity permutation and the permutation with the interior vertices swapped. Let there be a single diamond with $v$ vertices, where are $v-2$ interior vertices that can be swapped which gives the the $\mathcal{D}_{v,1}(231,312,321)$ permutations that avoid the three patterns. Now consider a single diamond with $v+1$ vertices. The final interior vertex will either be the $v$ element when there is no descent in the last two interior vertices, or the $v-1$ element when there is a descent between the final two interior vertices. When $v$ is the final interior vertex, there are $v-2$ vertices that can be re-arranged. Thus there are the $\mathcal{D}_{v,1}(231,312,321)$ permutations. When there is a descent in the final two interior vertices, there are $v-3$ interior vertices that can be re-arranged, thus there are the $\mathcal{D}_{v-1,1}(231,312,321)$ permutations. Hence $|\mathcal{D}_{v+1,1}(231,312,321)|=|\mathcal{D}_{v,1}(231,312,321)|+|\mathcal{D}_{v-1,1}(231,312,321)|.$ Therefore, $|\mathcal{D}_{v,1}(231,312,321)|$ follows the Fibonacci numbers.
\end{proof}

\subsection{Diamonds avoiding four patterns of length 3}

\begin{theorem}
Let $S$ be a set of at least $4$ distinct permutations of length $3$. \\
Then $\left|\mathcal{D}_{v,d}(S)\right| = \begin{cases} 0, & \mbox{if}\phantom{.} 123 \in S\\ 1, & \mbox{if} \phantom{.} 123 \not \in S \end{cases}.$
\end{theorem}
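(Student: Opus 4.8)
The plan is to prove both cases by analyzing how many distinct length-$3$ patterns a single valid diamond labelling must necessarily contain, and then leveraging the fact that avoiding four or more patterns simultaneously is extremely restrictive. First I would dispose of the case $123 \in S$ immediately: by Theorem \ref{av123}, every valid diamond labelling contains $123$ (since the least element, any middle element, and the greatest element of any diamond always form an increasing triple), so if $123 \in S$ then $\mathcal{D}_{v,d}(S)$ is empty, giving the value $0$.

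For the case $123 \notin S$, the key observation is that $S$ consists of at least $4$ patterns drawn from the five remaining patterns $\{132, 213, 231, 312, 321\}$. I would argue that the identity permutation (the diamond collection whose associated permutation is $1\,2\,\cdots\,vd$, with each diamond labelled consecutively and increasingly) always lies in $\mathcal{D}_{v,d}(S)$, since the identity contains no descents and hence no occurrence of any of $132, 213, 231, 312, 321$ — it contains only $123$, which is excluded from $S$. This shows $\left|\mathcal{D}_{v,d}(S)\right| \geq 1$. The main work is then the upper bound: I must show that the identity is the \emph{only} element of $\mathcal{D}_{v,d}(S)$ when $|S| \geq 4$.

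To establish $\left|\mathcal{D}_{v,d}(S)\right| \leq 1$, I would use the earlier three-pattern results as an organizing tool. Any superset of a set $P$ with $\left|\mathcal{D}_{v,d}(P)\right|$ small forces $\mathcal{D}_{v,d}(S) \subseteq \mathcal{D}_{v,d}(P)$. In particular, whenever $S$ contains one of the triples already enumerated — $\{132,213,321\}$ (Theorem \ref{av132213321}, giving $d$ elements) or $\{231,312,321\}$ (Theorem \ref{av231312321}) — I can restrict attention to the avoiders of that triple and check directly that imposing one additional pattern forces the permutation to be the identity. The heart of the argument is a case analysis over which four-element subset of $\{132,213,231,312,321\}$ the set $S$ is: there are only $\binom{5}{4}=5$ four-element subsets (and the full five-element set is contained in each), so it suffices to verify the five maximal-minus-one cases, and since any $S$ with $|S|\geq 4$ contains one of these five four-element subsets, establishing the bound for each four-element subset suffices. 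For each such subset I would show that a nonidentity diamond must contain at least one descent, and that any descent in a valid diamond labelling creates at least two of the five nonmonotone length-$3$ patterns, so that avoiding four of them leaves no room for a descent.

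The main obstacle will be verifying cleanly that any single descent in a valid diamond collection forces the presence of multiple distinct length-$3$ patterns simultaneously, so that no four-element pattern set can be avoided unless the permutation is strictly increasing. Concretely, I would argue that if $\pi_D$ has a descent $\pi_i > \pi_{i+1}$, then because every diamond contributes an increasing least-to-greatest chain, one can always find a third index producing patterns from at least two different ``shapes'' among $\{132,213,231,312,321\}$ depending on whether the descent is internal to a diamond or between diamonds and on the relative position of small and large labels; careful bookkeeping of these triples shows the descent is incompatible with avoiding any four of the five patterns. Once this local-to-global incompatibility is pinned down, the conclusion $\left|\mathcal{D}_{v,d}(S)\right| = 1$ follows immediately, completing the proof.
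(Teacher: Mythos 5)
Your approach is correct, but it takes a genuinely different route from the paper's. The paper handles the $123 \in S$ case exactly as you do, and for $123 \notin S$ it gives only a one-line argument deferring to the structural lemmas established earlier (e.g.\ Lemma \ref{lem132}, that $132$-avoidance forces each diamond's labels to increase), leaving the case check over pattern sets implicit. You instead propose a single uniform local lemma: every descent in a valid diamond labelling forces at least two \emph{distinct} patterns from $\{132,213,231,312,321\}$; since an $S$ with $|S|\geq 4$ and $123 \notin S$ omits at most one of these five, any avoider must be descent-free, hence the identity, which is realized by exactly one labelling. The step you flag as the ``main obstacle'' is in fact easy to discharge, because a descent can occur in only two places: between adjacent interior vertices $a>b$ of one diamond, where the diamond's least element $\ell$ gives the $132$ pattern $(\ell,a,b)$ and its greatest element $g$ gives the $213$ pattern $(a,b,g)$; or between the greatest element $g_k$ of diamond $k$ and the least element $\ell_{k+1}$ of diamond $k+1$, where $(\ell_k,g_k,\ell_{k+1})$ is a $132$ or a $231$ (according to whether $\ell_k<\ell_{k+1}$ or not) and $(g_k,\ell_{k+1},g_{k+1})$ is a $213$ or a $312$ (according to whether $g_{k+1}>g_k$ or not). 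One remark: once this lemma is in hand, your preliminary plan of checking the five $4$-element subsets of $\{132,213,231,312,321\}$ separately (via the three-pattern theorems) is redundant --- the uniform argument kills all cases simultaneously. That is the trade-off between the two proofs: the paper's version buys brevity by leaning on Sections \ref{Ssingle} and \ref{Spair}, while yours buys a self-contained argument readable independently of the rest of the paper.
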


\begin{proof}
Let $n$ be the largest label in any permutation. Due to the structure of the diamonds, any set of permutations involving $123$ cannot be avoided. For any other collection of $4$ or more patterns, the result is easily seen using the lemmas for avoiding a single pattern earlier in the paper.
\end{proof}

\section{Open problems}\label{open}

This investigation leaves several directions open for future study. We did not touch on patterns of length $4$, they all remain open. We are confident the techniques of Bevan et.al. \cite{Shrubs} will give the growth rate and minimal polynomial for diamonds avoiding $321$, but in addition it is likely that these techniques would also work for some patterns of length $4$. Although the minimal polynomials are unlikely to generalize, the transition operators in particular cases could potentially even generalize to length $k$ for the decreasing pattern $k\phantom{.}k-1\ldots2\phantom{.}1$. There are also a wide variety of other poset classes that could be approached in this manner other than diamonds. We generalized our diamonds by adding additional elements and order relations between the least and greatest elements, but one could also imagine creating a diamond-type poset with more than $3$ levels as another generalization.  

\acknowledgements
The authors would like to thank both referees for their careful readings of the paper with many constructive suggestions that not only improved the paper but also gave the authors excellent examples of the mathematical publishing process and language and notation standards. The authors would also like to thank David Bevan for confirming their suspicion that the techniques used in previous papers would also be successful here for $321$ avoiders and for calculating the first $250$ terms of the sequence.

\end{document}